\def\acts{\circlearrowright } 
\DeclareMathAlphabet{\mathscr}{LS1}{stixscr}{m}{n}
\newtheorem{theorem}{Theorem}[section]
\newtheorem{lemma}[theorem]{Lemma}
\newtheorem*{lemma*}{Lemma}
\newtheorem{proposition}[theorem]{Proposition}
\newtheorem{corollary}[theorem]{Corollary}
\theoremstyle{definition}
\newtheorem{definition}[theorem]{Definition}
\newtheorem{lemma-definition}[theorem]{Lemma/Definition}
\theoremstyle{definition}
\theoremstyle{remark}
\newtheorem{remark}[theorem]{Remark}
\newtheorem{example}[theorem]{Example}
\numberwithin{equation}{section}
\newcommand{\C}{\mathbb{C}}
\newcommand{\R}{\mathbb{R}}
\newcommand{\Z}{\mathbb{Z}}
\definecolor{lightgrey}{rgb}{0.7, 0.7, 0.7}
\definecolor{ballblue}{rgb}{0.0, 0.5, 1.0}
\definecolor{purple}{rgb}{0.0, 0.0, 0.0}
\renewcommand{\phi}{\varphi}
\renewcommand{\Im}{\mathfrak{Im}}
\renewcommand{\Re}{\mathfrak{Re}}
\DeclareMathOperator{\rk}{rk}
\title[$\C/\Lambda$-equivariant cohomology and the Witten class]{The (anti-)holomorphic sector in $\C/\Lambda$-equivariant cohomology, and the Witten class}
\author{Mattia Coloma}
\address{Università degli Studi di Roma "Tor Vergata"; Dipartimento di Matematica, Via della Ricerca Scientifica, 1 - 00133 - Roma, Italy; 
}
\email{coloma@mat.uniroma2.it}
\author{Domenico Fiorenza}
\address{Sapienza Universit\`a di Roma; Dipartimento di Matematica ``Guido Castelnuovo'', P.le Aldo Moro, 5 - 00185 - Roma, Italy; 
}
\email{fiorenza@mat.uniroma1.it}
\author{Eugenio Landi}
\address{Università di Roma Tre; Dipartimento di Matematica e Fisica, Largo San Leonardo Murialdo, 1 - 00146 - Roma, Italy;
}
\email{eugenio.landi@uniroma3.it}
\date{\today}
\begin{document}
\maketitle

\begin{abstract}
Atiyah's classical work on circular symmetry and stationary phase shows how the $\hat{A}$-genus is obtained by formally applying the equivariant cohomology  localization formula to the loop space of a simply connected spin manifold. The same technique, applied to a suitable ``antiholomorphic sector'' in the $\C/\Lambda$-equivariant cohomology of the conformal double loop space $\mathrm{Maps}(\C/\Lambda,X)$ of a rationally string manifold $X$ produces the Witten genus of $X$. This can be seen as an equivariant localization counterpart to Berwick-Evans supersymmetric localization derivation of the Witten genus.
\end{abstract}

\begin{quote}
\begin{flushright}
    \emph{Se vogliamo che tutto rimanga come è, bisogna che tutto cambi.}
\end{flushright}
\end{quote}

\tableofcontents

\section{Introduction}
In the classic work \cite{atiyahcircsym}, Atiyah shows how to recover the $\hat{A}$-class of a compact smooth spin\footnote{{\color{purple}The spin structure on $X$ is needed in order to have an orientation on the free loop space $\mathrm{Maps}(\mathbb{T},X)$. For the constructions in the present paper, since we are not concerned with integration over loop spaces, the weaker assumption that $X$ is orientable will suffice. 
}}
manifold $X$ via a formal infinite dimensional version of the Duistermaat-Heckman formula applied to the smooth loop space $\mathrm{Maps}(\mathbb{T},X)$ of maps from a circle to $X$. Such a formula is a particular case of the well known localization formula for torus equivariant cohomology, extensively treated in \cite{ATIYAHmoment}. The appearance of the $\hat{A}$-class in such an infinite dimensional version of localization techniques in torus equivariant cohomology was pointed out by Atiyah as ``a brilliant observation of the physicist E. Witten'' and suggests that, reasoning as in \cite{atiyahcircsym}, the Witten class $\mathrm{Wit}(X)$ \cite{cmp/1104117076,Witten:1987cg}, should 
emerge from a localization formula for the torus equivariant cohomology of the double loop space $\mathrm{Maps}(\mathbb{T}^2,X)$ of maps from a 2-dimensional torus to $X$. This is indeed the case, as long as one makes an a priori unjustified assumption: that the generators $u,v$ of the $\mathbb{T}^2$-equivariant cohomology of a point over $\C$, 
\[
H_{\mathbb{T}^2}^*(\mathrm{pt};\C) \cong \C[u,v],
\]
are not independent but rather satisfy a $\C$-linear dependence condition of the form
\[
v=\tau u
\]
where $\tau$ is a point in the complex upper half plane $\mathbb{H}$, see \cite{lu2008regularized}. Although the hypothesis of $\C$-linear dependence of the polynomial variables $u,v$ may appear somewhat ``ad hoc'' to make the computations work out, it suggests that if instead of looking at a topological torus $\mathbb{T}^2$ we consider a complex torus $\C/\Lambda$ then there should exist a version of the localization theorem for torus equivariant cohomology, where only a holomorphic variable $\xi$ (or its conjugate $\overline{\xi}$) appears, instead of the two real variables $u,v$. In this paper we show that such a {\it holomorphic} ({resp. \it antiholomorphic}) {\it sector} of the $\C/\Lambda$-equivariant cohomology can indeed be defined and that an (anti-)holomorphic localization formula holds. Going back to what inspired it, in the final part of the paper we show how the Witten class of a compact smooth manifold emerges from the antiholomorphic localization formula for the $\C/\Lambda$-equivariant cohomology of the double loop space $\mathrm{Maps}(\C/\Lambda,X)$. 
More precisely, the idea is to formally apply the finite-dimensional antiholomorphic localization formula obtained in the first part of the paper to the inclusion of $X$ in $\mathrm{Maps}(\C/\Lambda,X)$ as the submanifold of constant maps (that are the fixed points for the $\C/\Lambda$-translation action on $\mathrm{Maps}(\C/\Lambda,X)$). It turns out, however, that the infinite products that would naively define the equivariant Euler class for the normal bundle $\nu_\Lambda$  of $X$ in $\mathrm{Maps}(\C/\Lambda,X)$ do not converge, so a suitable $\zeta$-regularization is needed in order to make sense of these infinite products. Once this is done, one obtains that if $X$ is a compact rational string manifold, i.e., if $X$ is a compact manifold with torsion first Pontryagin class, then the inverse of the normalized Euler class of $\nu_\Lambda$ defines a modular form with values in the complex cohomology of $X$, which turns out to be the Witten class of $X$. In particular, the integral over $X$ of the inverse of the normalized Euler class of $\nu_\Lambda$ is the Witten genus of $X$. From the point of view of $\C/\Lambda$-equivariant cohomology, the geometric condition that $X$ needs to be a rationally string manifold will emerge as the condition ensuring that the $\zeta$-regularization procedure involved in the infinite rank localization formula is independent of the choice of arguments for the nonzero elements in the lattice $\Lambda\subset \C$. This condition will also imply that the expected modular properties of the inverse normalized Euler class are not disrupted by the $\zeta$-regularization.

Equivariant cohomology and the Atiyah--Bott localization formula admit an elegant rephrasing in terms of supergeometry.\footnote{{\color{purple}This is a classical idea whose origin can be traced back to the foundational works in supersymmetric field theories from the 70s and the 80s. It has then become popular in the mathematical community through purely mathematical applications such as Witten's \cite{witten1982supersymmetry}, and has been a main source of inspiration for Atiyah and Bott's \cite{ATIYAHmoment}, which is in turn one of the main sources for this note.  See, e.g., \cite{blau-thompson}, \cite{schwarz-zaboronsky}, and \cite{pestun-introduction-to-localization-in-quantum-field-theory} for  modern surveys from the point of view of supersymmetric field theories.}} Reversing this point of view, every differential geometric construction obtained through supersymmetric localization techniques in quantum field theory should in principle admit a derivation internal to the setting of equivariant cohomology. In this sense, the results of this paper can be seen as an equivariant localization counterpart to Berwick-Evans supersymmetric localization derivation of the Witten genus \cite{berwickevans2013witten, berwickevans2019supersymmetric}, with the Weierstra{\ss} $\zeta$-regularization of equivariant Euler classes playing the role of the $\zeta$-regularization of infinite dimensional determinants in supersymmetric quantum field theory.
\vskip .5 cm

We thank the Referee for very useful comments and suggestions that helped us in improving both the content and the exposition of the paper. D.F.'s research has been partially supported by PRIN 2017 --- 2017YRA3LK.

\section{1d Euclidean tori equivariant cohomology}
\subsection{The Euclidean Cartan complex for circle actions}
As a half-way step towards two dimensional real tori endowed with a complex structure $\C/\Lambda$, we start by recalling a few basic constructions in the equivariant cohomology for 1-dimensional torus actions, formulating them for 1-dimensional Euclidean tori $\R/\Lambda$ rather than for the topological 1-dimensional torus $\mathbb{T}$. Here $\Lambda \subset \R$ is a lattice in $\R$, i.e. an additive subgroup of $\R$ isomorphic to $\Z$.

 The quotient $\R/\Lambda$ can be thought of as a circle of length $\ell$, with $\ell$ the minimum strictly positive element of $\Lambda$. It has a natural structure of real Lie group; we will denote its Lie algebra by $\mathfrak{t}_{\Lambda}$.
Next we consider a compact smooth manifold $M$ with a smooth action of $\R/\Lambda$,
 denote by $\Omega^\bullet(M;\R)^{\R/\Lambda}$ the $\R/\Lambda$-invariant part of the de Rham algebra of $M$, and endow 
 \[
\Omega^\bullet(M;\R)^{\R/\Lambda}\otimes_\R\mathrm{Sym}({\mathfrak{t}_\Lambda}^\vee[-2])
\]
with a bigrading where the component of bidegree $(k,l)$ is $\Omega^{k-l}(M;\R)^{\R/\Lambda}\otimes_\R\mathrm{Sym}^l({\mathfrak{t}_\Lambda}^\vee[-2])$. 
This bigraded vector space comes equipped with a structure of bicomplex where the differential of degree $(1,0)$ is the de Rham differential (acting trivially on $\mathrm{Sym}({\mathfrak{t}_\Lambda}^\vee[-2])$) and the differential of degree $(0,1)$ is the operator ${e_\Lambda}^\vee[-2] \iota_{v_{e_\Lambda}}$, where $({e_\Lambda},{e_\Lambda}^\vee)$ is a pair consisting of a linear generator of $\mathfrak{t}_\Lambda$ and of its dual element in ${\mathfrak{t}}_\Lambda^\vee$, and $v_{e_\Lambda}$ is the vector field on $M$ corresponding to $e_\Lambda$ via the differential of the action. The operator $\iota$ is the contraction operator. It is immediate to see that ${e_\Lambda}^\vee[-2] \iota_{v_{e_\Lambda}}$ is independent of the choice of the generator $e_\Lambda$. 
\begin{definition}
The Cartan complex of $\R/\Lambda\acts M$ is the total complex of the bicomplex 
\[
(\Omega^\bullet(M;\R)^{\R/\Lambda}\otimes_\R\mathrm{Sym}({\mathfrak{t}_\Lambda}^\vee[-2]);d_{\mathrm{dR}},{e_\Lambda}^\vee[-2] \iota_{v_{e_\Lambda}}).
\]
The total differential in the Cartan complex is denoted by $d_{\R/\Lambda}$ and is called the equivariant differential.
Elements in the Cartan complex that are $d_{\R/\Lambda}$-closed are called \emph{equivariantly closed} forms.
\end{definition}
\begin{remark}
The importance of the Cartan complex resides in the fact its cohomology is the real $\R/\Lambda$-equivariant cohomology $H^\bullet_{\R/\Lambda}(M;\R)$ of $M$. So it provides a differential geometric tool to compute this cohomology. It is the generalization to the equivariant setting of the de Rham complex computing real singular cohomology.
\end{remark}
\begin{remark}
Evaluation at $0\in \mathfrak{t}^{\vee}_\Lambda[-2]$ is a morphism of complexes from the Cartan complex to the de Rham complex $(\Omega^\bullet(M;\R)^{\R/\Lambda},d_{\mathrm{dR}})$ of $\R/\Lambda$-invariant forms. One says that an element $\widetilde{\omega}$ in the Cartan complex is an extension of an invariant form $\omega$ if $\widetilde{\omega}\vert_0=\omega$.
\end{remark}
\begin{remark}
The quotient map $\R\to \R/\Lambda$ gives a distinguished Lie algebra isomorphism $\mathrm{Lie}(\R)\xrightarrow{\sim} \mathfrak{t}_\Lambda$. By means of this isomorphism, the Cartan bicomplex is isomorphic to 
\[
(\Omega^\bullet(M;\R)^{\R/\Lambda}[u];d_{\mathrm{dR}}, u\iota_{v_{d/dx}})
\]
where $d/dx$ is the standard basis vector in $\mathrm{Lie}(\R)$ and $u$ is a degree 2 formal variable corresponding to the dual 1-form $dx$ placed in degree 2. Notice that with respect to the bigrading, the variable $u$ has bidegree $(1,1)$.
\end{remark}

{\color{purple}\begin{remark}\label{rem:grading-convention}
With the usual bigrading on the Cartan complex, i.e., with $\Omega^{k}(M;\R)^{\R/\Lambda}\otimes_\R\mathrm{Sym}^l({\mathfrak{t}_\Lambda}^\vee[-2])$ in bidegree $(k,2l)$, the differentials $d_{\mathrm{dR}}$ and ${e_\Lambda}^\vee[-2] \iota_{v_{e_\Lambda}}$ have bidegree $(1,0)$ and $(-1,2)$, respectively. We made a linear change in the grading so that they have bidegree $(1,0)$ and $(0,1)$, respectively.
\end{remark}}

\subsection{$\R/\Lambda$-equivariant characteristic classes}\label{R-equivariant-characteristic-classes}
Equivariant vector bundles over an $\R/\Lambda$-manifold come with a natural notion of equivariant characteristic classes. When the action on the manifold is trivial\footnote{This does not imply that the action is trivial on the bundle.}, equivariant characteristic classes admit a simple combinatorial/representation theoretic description that we recall below. 
\begin{remark}
A typical situation where one meets equivariant vector bundles on a $\R/\Lambda$-trivial base is by considering equivariant vector bundles on the $\R/\Lambda$-fixed point locus $\mathrm{Fix}(M)$ in an $\R/\Lambda$-manifold $M$. Notice that, since $\R/\Lambda$ is a compact Lie group, its action on $M$ is automatically proper and so $\mathrm{Fix}(M)$ is a smooth submanifold of $M$.  Equivariant vector bundles on $\mathrm{Fix}(M)$ need not be restrictions of equivariant vector bundles on $M$. A classical example is the normal bundle $\nu$ for the inclusion $\mathrm{Fix}(M)\hookrightarrow M$.
\end{remark}
For ease of exposition, we will tacitly assume $\mathrm{Fix}(M)$ to be connected: in the more general situation of a possibly nonconnected fixed point locus all the constructions we recall in this section are to be repeated for each of the connected components of $\mathrm{Fix}(M)$. 
\begin{remark}
For a $\R/\Lambda$-trivial manifold one has $M=\mathrm{Fix}(M)$, so it is actually not restrictive to work with submanifolds of the form $\mathrm{Fix}(M)$ when one is interested into equivariant vector bundles over $\R/\Lambda$-trivial base manifolds.
\end{remark}
\begin{remark}
As the $\R/\Lambda$-action is trivial on $\mathrm{Fix}(M)$, the associated Cartan bicomplex is
\[
(\Omega^\bullet(\mathrm{Fix}(M);\R)\otimes_\R \mathrm{Sym}({\mathfrak{t}_\Lambda}^\vee[-2]);d_{\mathrm{dR}}, 0)
\]
and so the $\R/\Lambda$-equivariant cohomology of $\mathrm{Fix}(M)$ is
\[
H^\bullet_{\R/\Lambda}(\mathrm{Fix}(M);\R)=H^\bullet(\mathrm{Fix}(M);\R)\otimes_\R \mathrm{Sym}({\mathfrak{t}_\Lambda}^\vee[-2]).
\]
\end{remark}
The key to the combinatorial description of equivariant complex vector bundles over $\R/\Lambda$-trivial base manifolds is the following statement, which is an immediate consequence of the regularity of the decomposition into isotypic components of smooth families of complex representations of compact Lie groups. The statement is well known, see, e.g., \cite[Proposition 4.6]{sinha} where however it is given without a proof. For completeness, we provide a proof for the particular case we are interested in.
\begin{lemma}
An $\R/\Lambda$-equivariant complex line bundle on $\mathrm{Fix}(M)$ is equivalently the datum of a pair $(L,\chi)$, where $L$ is a complex line bundle on $\mathrm{Fix}(M)$ and $\chi\colon \R/\Lambda\to U(1)$ is a character of $\R/\Lambda$. 
\end{lemma}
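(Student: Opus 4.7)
The plan is to go in both directions and exhibit mutually inverse constructions.

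\textbf{From $(L,\chi)$ to an equivariant line bundle.} Given an ordinary complex line bundle $L$ on $\mathrm{Fix}(M)$ and a character $\chi\colon \R/\Lambda\to U(1)$, I would equip $L$ with the $\R/\Lambda$-action defined fiberwise by scalar multiplication: $g\cdot \ell_x = \chi(g)\,\ell_x$ for $x\in\mathrm{Fix}(M)$ and $\ell_x\in L_x$. Since the action on the base is trivial, this is automatically a bundle map covering the identity, smooth in all arguments, and compatible with the group law because $\chi$ is a homomorphism; hence the resulting $(L,\chi\text{-action})$ is an $\R/\Lambda$-equivariant line bundle.

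\textbf{From an equivariant line bundle to $(L,\chi)$.} Conversely, let $\widetilde{L}\to\mathrm{Fix}(M)$ be any $\R/\Lambda$-equivariant complex line bundle. Since the base action is trivial, for every $x\in\mathrm{Fix}(M)$ the group $\R/\Lambda$ acts $\C$-linearly on the one-dimensional fiber $\widetilde{L}_x$, giving a continuous (hence smooth) character $\chi_x\colon \R/\Lambda\to \mathrm{GL}(\widetilde{L}_x)\cong \C^\times$. Compactness of $\R/\Lambda$ forces $\chi_x$ to land in $U(1)$. The key point is to show that $x\mapsto \chi_x$ is independent of $x$. Here I would invoke the regularity statement quoted before the lemma: on a small trivializing open set $U\subset \mathrm{Fix}(M)$ over which $\widetilde{L}\vert_U\cong U\times\C$, the equivariant structure is a smooth map $U\to \mathrm{Hom}_{\mathrm{grp}}(\R/\Lambda,U(1))$, i.e.\ a smooth family of characters. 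Since the character group $\widehat{\R/\Lambda}\cong \Z$ is discrete, any continuous family of characters is locally constant; by the connectedness assumption on $\mathrm{Fix}(M)$ it is globally constant, and we obtain a well-defined $\chi\in\widehat{\R/\Lambda}$. Setting $L$ to be $\widetilde{L}$ with its $\R/\Lambda$-action forgotten produces the desired pair $(L,\chi)$.

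\textbf{Mutual inverseness.} The two constructions are manifestly inverse to each other at the level of underlying data: starting from $(L,\chi)$, reconstructing the action fiberwise by $\chi$ recovers $\chi$ and leaves $L$ unchanged; starting from $\widetilde{L}$, forgetting the action and then reintroducing it via the extracted character gives back the original equivariant structure because on every fiber the action was already scalar multiplication by $\chi$.

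\textbf{Main obstacle.} The only nontrivial point is the smooth (or even continuous) dependence of $\chi_x$ on $x$, i.e.\ the step that allows us to pass from a pointwise family of characters to a single global character. This is exactly what is handed to us by the regularity of isotypic decompositions for smooth families of compact group representations, so once that regularity is accepted the proof reduces to the elementary observation that $\widehat{\R/\Lambda}$ is discrete and $\mathrm{Fix}(M)$ is connected.
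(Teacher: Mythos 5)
Your argument is correct and follows exactly the route the paper intends: the paper gives no written proof, asserting the lemma as an ``immediate consequence of the regularity of the decomposition into isotypic components of smooth families of representations,'' and your proof is precisely the elaboration of that remark (fiberwise scalar action in one direction; in the other, regularity gives a continuous family of characters, which is constant since $\widehat{\R/\Lambda}\cong\Z$ is discrete and $\mathrm{Fix}(M)$ is assumed connected, as in the paper's footnote). Nothing is missing.
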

\begin{proof}
Let us denote by $L_p$ the fiber of $L$ on the point $p\in M$.
The datum of an $\R/\Lambda$-equivariant complex line bundle $L$ on $\mathrm{Fix}(M)$ is the datum of a collection of group homomorphisms 
$\R/\Lambda\to \mathrm{Aut}_{\mathbb{C}}(L_p)$, smoothly depending on $p\in \mathrm{Fix}(M)$. Since $L_p$ is 1-dimensional, one has a canonical isomorphism $\mathrm{Aut}_{\mathbb{C}}(L_p)\cong \mathbb{C}^\ast$, so our datum is the datum of a smooth family of Lie group homomorphisms $\chi_p\colon \R/\Lambda\to \mathbb{C}^\ast$. Since $\R/\Lambda$ is compact, these have to factor through $U(1)$ and so they form a smooth family of characters $\chi_p\colon \R/\Lambda\to U(1)$. Since $U(1)$-valued characters of $\R/\Lambda$ are uniquely defined by their topological degree as smooth maps $\R/\Lambda\to U(1)$ and the topological degree is a homotopy invariant, we have that $\chi_p$ is constant on connected components of $\mathrm{Fix}(M)$. Therefore, if $\mathrm{Fix}(M)$ is connected, as we are assuming, we are reduced with the datum of a single $U(1)$-valued character $\chi$ of $\R/\Lambda$.
\end{proof}

By the above Lemma, in what follows we will write an $\R/\Lambda$-equivariant complex line bundle over $\mathrm{Fix}(M)$ as a pair $(L,\chi)$.
\begin{definition}
Let $\chi\colon \R/\Lambda\to U(1)$ be a character. The \emph{weight} of  $\chi$ is the linear map 
\[
w_\chi\colon \mathfrak{t}_\Lambda\to \mathbb{R}
\]
defined as follows:
$2\pi i w_\chi$ is the Lie algebra homomorphism
$2\pi i w_\chi\colon \mathfrak{t}_\Lambda \to 2\pi i \R=\mathrm{Lie}(U(1))$
associated with the Lie group homomorphism $\chi$, i.e., $2\pi i w_\chi$ is the linear map making the diagram
\[
\begin{tikzcd}
\mathfrak{t}_\Lambda \cong \R \arrow[d, "\mathrm{proj}"'] \arrow[r, "2\pi i w_{\chi}"] & 2\pi i \R \arrow[d, "\mathrm{exp}(-)"] \\
\R /\Lambda \cong \R/\lambda\Z \arrow[r, "\chi"']                                                 & U(1)                                  
\end{tikzcd}
\]
commute. 
\end{definition}
\begin{remark}
Notice that, by definition, $w_{\chi}$ is an element of ${\mathfrak{t}_\Lambda}^\vee$, and so $w_{\chi}[2]\in {\mathfrak{t}_\Lambda}^\vee[-2]\subseteq \mathrm{Sym}({\mathfrak{t}_\Lambda}^\vee[-2])$.
\end{remark}

\begin{definition}
Let $(L,\chi)$ be an $\R/\Lambda$-equivariant complex line bundle over $\mathrm{Fix}(M)$.
The equivariant first Chern class of $(L,\chi)$ is the element of $H^\bullet(\mathrm{Fix}(M);\R)\otimes_\R \mathrm{Sym}({\mathfrak{t}_\Lambda}^\vee[-2])$ given by
\[
c_{1,\R/\Lambda}(L,\chi):=c_1(L)+w_\chi[-2].
\]
\end{definition}
\begin{remark}
 It is convenient to give a more explicit description of $c_{1,\R/\Lambda}(L,\chi)$ in terms of the isomorphism 
 \[
 H^\bullet_{\R/\Lambda}(\mathrm{Fix}(M);\R)\cong H^\bullet(\mathrm{Fix}(M);\R)[u]
 \]
 induced by the Lie algebra isomorphism $\mathrm{Lie}(\R)\xrightarrow{\sim}\mathfrak{t}_\Lambda$. In order to do so, recall that characters of $\R/\Lambda$ are indexed by the dual lattice $\Lambda^\vee$ of $\Lambda$ and that
via the standard inner product in $\R$ this is identified with $\Lambda$: 
every character of $\R/\Lambda$ is of the form 
\[
\chi(x) = \rho_\lambda(x) := \mathrm{exp}(2\pi i \lambda \mathrm{vol}(\R/\Lambda)^{-2}x),
\]
for some $\lambda \in \Lambda$. The associated weight $w_\lambda$ is then $w_\lambda=\lambda\mathrm{vol}(\R/\Lambda)^{-2}  dx$ so that $w_\lambda [-2]=\lambda \mathrm{vol}(\R/\Lambda)^{-2}u$.
The equivariant first Chern class of $(L,\rho_\lambda)$ is then written as $
c_{1,\R/\Lambda}(L,\rho_\lambda)=c_1(L)+\lambda \mathrm{vol}(\R/\Lambda)^{-2}u$.
Introducing the rescaled formal variable $u_\Lambda:=\mathrm{vol}(\R/\Lambda)^{-2}u$, of the same bidegree as $u$, this is written
\[
c_{1,\R/\Lambda}(L,\rho_\lambda)=c_1(L)+\lambda u_\Lambda.
\]
\end{remark}
For a $\R/\Lambda$-equivariant complex vector bundle $E$ on $\mathrm{Fix}(M)$ one defines the equivariant Chern classes of $E$ by the equivariant splitting principle. Namely, first one decomposes $E$ as the direct sum of its isotypic components,
\[
E=\bigoplus_{\chi\in \Lambda^\vee}E_\chi;
\]
next, one defines the equivariant Chern roots of each component $E_\chi$ via the splitting principle:
\[
\{\alpha_{i,\R/\Lambda}(E_\chi)\}_{i=1,\dots \mathrm{rk}E_\chi}
=
\{\alpha_{i}(E_\chi)+w_\chi[-2]\}_{i=1,\dots \mathrm{rk}E_\chi},
\]
where the $\alpha_{i}(E_\chi)$'s are the Chern roots of $E_\chi$. Finally one defines the total $\R/\Lambda$-equivariant Chern class of $E$ by means of these equivariant Chern roots.
\begin{definition}
In the same notation as above, the total $\R/\Lambda$-equivariant Chern class of $E$
is 
\[
c_{\R/\Lambda}(E):=\prod_{\chi\in \Lambda^\vee}c_{\R/\Lambda}(E_\chi),
\]
with
\[
c_{\R/\Lambda}(E_\chi):=\prod_{i=1}^{\mathrm{rk} E_\chi}(1+\alpha_{i,\R/\Lambda}(E_\chi)).
\]
In particular, the top $\R/\Lambda$-equivariant Chern class of $E$ is
\[
c_{\mathrm{top},\R/\Lambda}(E)=\prod_{\chi\in \Lambda^\vee}\prod_{i=1}^{\mathrm{rk} E_\chi}(\alpha_{i}(E_\chi)+w_\chi[-2]).
\]
\end{definition}
{\color{purple}
\begin{remark}
All through the paper we will denote by $\mathrm{rk}E$ the complex rank of a complex vector bundle and by $\rk_{\R}(E)$ the real rank of a real vector bundle (and of the real vector bundle underlying a complex vector bundle).
 Notice that for $\mathrm{rk}E_{\chi}=0$ the product $\prod_{i=1}^{\mathrm{rk} E_{\chi}}$ is an empty product and so it is 1 by definition. That is, only the $\chi$'s with $\mathrm{rk}E_{\chi}\geq 1$ actually contribute to the above products.
 \end{remark}}

\begin{remark}
In terms of the formal variable $u_\Lambda$ and the identification between $\Lambda^\vee$ and $\Lambda$, these read

\[
c_{\R/\Lambda}(E)=\prod_{\lambda\in \Lambda}\prod_{i=1}^{\mathrm{rk} E_{\rho_\lambda}}(1+\alpha_{i,\R/\Lambda}(E_{\rho_\lambda})+\lambda u_\Lambda)
\]
and
\[
c_{\mathrm{top},\R/\Lambda}(E)=\prod_{\lambda\in \Lambda}\prod_{i=1}^{\mathrm{rk} E_{\rho_\lambda}}(\alpha_{i}(E_{\rho_\lambda })+\lambda u_\Lambda).
\]
\end{remark}
\bigskip

It is convenient to isolate the contribution from the isotypic component of the trivial character $\mathbf{0}\in \Lambda^\vee$, corresponding to the zero weight. We write
\[
E=E_{\mathbf{0}}\oplus E^{\mathrm{eff}}=E_{\mathbf{0}}\oplus\bigoplus_{\chi\in \Lambda^\vee\setminus \{\mathbf{0}\}}E_\chi=
E_{\mathbf{0}}\oplus\bigoplus_{\lambda\in \Lambda\setminus\{0\}}E_{\rho_\lambda},
\]
and call $E^{\mathrm{eff}}$ the \emph{effectively acted} bundle. By multiplicativity of the total Chern class and of the top Chern class one finds
\[
c_{\R/\Lambda}(E)=c(E_{\mathbf{0}})c_{\R/\Lambda}(E^{\mathrm{eff}}); \qquad
c_{\mathrm{top},\R/\Lambda}(E)=c_{\mathrm{top}}(E_{\mathbf{0}})c_{\mathrm{top},\R/\Lambda}(E^{\mathrm{eff}}).
\]
\begin{definition}
The \emph{weight polynomial} of $E^{\mathrm{eff}}$ is the element in $\mathrm{Sym}({\mathfrak{t}_\Lambda}^\vee[-2])$ given by
\[
wp(E^{\mathrm{eff}}):=\prod_{\chi\in \Lambda^\vee\setminus\{\mathbf{0}\}}(w_\chi[-2])^{\mathrm{rk}E_\chi}=
\prod_{\chi\in \Lambda^\vee\setminus\{\mathbf{0}\}}w_\chi^{\mathrm{rk}E_\chi}[-2\mathrm{rk} E].
\]
\end{definition}
\begin{remark}
By construction, the weight polynomial $wp(E^{\mathrm{eff}})$ is a nonzero element in $\mathrm{Sym}({\mathfrak{t}_\Lambda}^\vee[-2])$.
\end{remark}
By localizing the $\R/\Lambda$-equivariant cohomology of $\mathrm{Fix}(M)$ at $wp(E^{\mathrm{eff}})$, i.e., by formally inverting $wp(E^{\mathrm{eff}})$ one can rewrite the top $\R/\Lambda$-equivariant Chern class of $E^{\mathrm{eff}}$ as
\begin{equation}\label{eq:top-chern}
c_{\mathrm{top},\R/\Lambda}(E^{\mathrm{eff}})= wp(E^\mathrm{eff})\widehat{c_{\mathrm{top},\R/\Lambda}}(E^{\mathrm{eff}}),
\end{equation}
where
\begin{equation}\label{eq:normalized-top-chern}
\widehat{c_{\mathrm{top},\R/\Lambda}}(E^{\mathrm{eff}})
:=\prod_{\chi\in \Lambda^\vee\setminus\{\mathbf{0\}}}\prod_{i=1}^{\mathrm{rk} E_\chi}\left(1+\frac{\alpha_{i}(E_\chi)}{w_\chi[-2]}\right).
\end{equation}
\begin{definition}
The degree zero element $\widehat{c_{\mathrm{top},\R/\Lambda}}(E^{\mathrm{eff}})$ in the localization  $H^\bullet_{\R/\Lambda}(\mathrm{Fix}(M);\R)_{(wp(E^{\mathrm{eff}}))}$ is called the \emph{normalized top Chern class} of $E^{\mathrm{eff}}$.
\end{definition}
\begin{remark}
Notice that $\widehat{c_{\mathrm{top},\R/\Lambda}}(E^{\mathrm{eff}})$ is an invertible element in the localization  $H^\bullet_{\R/\Lambda}(\mathrm{Fix}(M);\R)_{(wp(E^{\mathrm{eff}}))}$.
\end{remark}
\begin{remark}

Equivalently, in terms of the variable $u_\Lambda$ one writes
\[c_{\mathrm{top},\R/\Lambda}(E^\mathrm{eff})=u_\Lambda^{\mathrm{rk} E^\mathrm{eff}}
\left(\prod_{\lambda\in \Lambda\setminus\{0\}}\lambda ^{\mathrm{rk} E_{\rho_\lambda}} \right) 
\underbrace{\prod_{\lambda\in \Lambda\setminus\{0\}}\prod_{i=1}^{\mathrm{rk} E_{\rho_\lambda}}\left(1+\frac{\alpha_{i}(E_{\rho_\lambda})u_\Lambda^{-1}}{ \lambda}\right)}_{\widehat{c_{\mathrm{top},\R/\Lambda}}(E^{\mathrm{eff}})}.
\]
\end{remark}

\subsection{Equivariant Euler classes of real vector bundles}\label{R-equivariant-euler-class}
Since equivariant vector bundles come naturally with a notion of equivariant characteristic classes, real oriented equivariant vector bundles come with a natural notion of equivariant Euler class. And again, if the equivariant vector bundle has a {\color{purple}base space with trivial $\R/\Lambda$-action}, the combinatorics behind the computation of an equivariant Euler class is purely representation theoretic.
\par 
Real irreducible representations of $\R/\Lambda$ are indexed by the quotient set $\Lambda^\vee/\pm$. The unique fixed point $\mathbf{0}$ corresponds to the trivial representation, which is the unique 1-dimensional real representation of $\R/\Lambda$; the equivalence class $[\chi]$ of the complex character $\chi$ corresponds to the irreducible real 2-dimensional representation $\chi_\R$. As $\chi^{-1}\cong \overline{\chi}$, we see that $(\chi^{-1})_\R$ and $\chi_\R$ are isomorphic as real representations. In terms of the distinguished isomorphism of $\Lambda^\vee\cong \Lambda$ induced by the inner product, the involution on $\Lambda^\vee$ reads $\lambda\leftrightarrow -\lambda$ and the above isomorphism of complex characters is $\rho_{-\lambda}\cong \overline{\rho_\lambda}$. In particular, we see that every  nontrivial irreducible real representation of $\R/\Lambda$ factors through 
a complex character via the standard inclusion $U(1)\cong SO(2)\hookrightarrow O(2)$:
\[
\begin{tikzcd}
\R/\Lambda \arrow[r, "\chi"'] \arrow[rr, "\phi", bend left] & U(1) \arrow[r] & O(2)
\end{tikzcd}
\]
As a consequence, if we decompose an $\R/\Lambda$-equivariant real vector bundle $V$ over $\mathrm{Fix}(M)$ as
\[
V = V_{[\mathbf{0}]}\oplus{V^{\mathrm{eff}}} = V_{[\mathbf{0}]}\oplus \bigoplus_{[\chi] \in \Lambda^\vee\setminus{\{\mathbf{0}\}}/\pm}V_{[\chi]},
\]
we see that the effective component $V^{\mathrm{eff}}$ can always be endowed (non canonically) with a complex structure. In particular $V^{\mathrm{eff}}$ is always an even rank orientable vector bundle. 
\begin{remark}
By choosing an orientation for $V^{\mathrm{eff}}$ one has a well defined equivariant Euler class for it, and a change in the choice of the orientation corresponds to a sign change in the equivariant Euler class. 
\end{remark}
The above remark leads to the following doubling trick. The two possible equivariant Euler classes for $V^{\mathrm{eff}}$, corresponding to the two possible orientations, are precisely the two solutions of the equation
\begin{equation}\label{eq:equation-for-eul}
[\omega]^2=(-1)^\frac{\rk V^\mathrm{eff}}{2}c_{\mathrm{top},\R/\Lambda}(V^\mathrm{eff} \otimes \C)
\end{equation}
with $[\omega]$ of degree $\frac{1}{2}\rk_\R V^{\mathrm{eff}}$. The choice of one solution then determines an orientation of $V^{\mathrm{eff}}${\color{purple}; namely, the orientation whose corresponding equivariant Euler class is the chosen solution}. 
\begin{remark}\label{rem:pfaffian}
Since characteristic classes with real or complex coefficients can be computed via Chern-Weil theory, equation (\ref{eq:equation-for-eul}) has a simple origin in linear algebra: if $F_\nabla\in \Omega^2(M,\mathfrak{so}(2k))$ is the curvature 2-form for a Riemannian connection $\nabla$ on an even rank orientable vector bundle $V$ on a smooth manifold $M$, then the top Chern class of $V\otimes \mathbb{C}$  has a closed form representative given by the determinant $\det(\frac{1}{2\pi i}F_\nabla)=(-1)^{k}\det(\frac{1}{2\pi}F_\nabla)$, while the Euler class of $V$ has a closed form representative given by the Pfaffian $\mathrm{Pf}(\frac{1}{2\pi}F_\nabla)$, and for any skew-symmetric matrix $A$ in $\mathfrak{so}(2k)$ one has $\mathrm{Pf}(A)^2=\det(A)$.
\end{remark}
\begin{definition}\label{def:equivariant-euler1}
Let a choice of arguments for the elements $\lambda\in \Lambda\setminus\{0\}$ be fixed. The equivariant Euler class $\mathrm{eul}_{\R/\Lambda}(V^\mathrm{eff})$ defined by this choice is the 
distinguished solution of equation (\ref{eq:equation-for-eul}), given by
\begin{equation}\label{eul:topchern}
\mathrm{eul}_{\R/\Lambda}(V^\mathrm{eff}) := (iu_\Lambda)^{\frac{\rk V^{\mathrm{eff}}}{2}}\left(\prod_{\lambda\in \Lambda\setminus\{0\}}\lambda ^{\frac{\mathrm{rk} (V^{\mathrm{eff}}\otimes\C)_{\rho_\lambda}}{2}} \right)\sqrt{\widehat{c_{\mathrm{top},\R/\Lambda}}(V^\mathrm{eff} \otimes \C)},
\end{equation}
where the determination of the square root is such that $\sqrt{1+t}=1+t/2+\cdots$. The distinguished orientation on $V^\mathrm{eff}$ defined by the given choice of arguments is the one that is coherent with this choice of equivariant Euler class.
\end{definition}
\begin{remark}
Since we are assuming $V$ is a finite rank vector bundle, only finitely many ranks $\rk(V^{\mathrm{eff}}\otimes \C)_{\rho_\lambda}$ are nonzero. So the product in (\ref{eul:topchern}) is actually a finite product and one actually only needs to choose arguments for the finitely many $\lambda$'s in $\Lambda\setminus 0$ such that $\rk(V^{\mathrm{eff}}\otimes \C)_{\rho_\lambda}$ is nonzero.
\end{remark}

\begin{definition}\label{def:normalized-equivariant-euler1}
The $(\R/\Lambda)$-equivariant cohomology class 
\[
\widehat{\mathrm{eul}_{\R/\Lambda}}(V^\mathrm{eff}):=\sqrt{\widehat{c_{\mathrm{top},\R/\Lambda}}(V^\mathrm{eff} \otimes \C)}
\]
is called the normalized equivariant Euler class of $V^\mathrm{eff}$.
\end{definition}
\begin{remark}
The normalized equivariant Euler class $\widehat{\mathrm{eul}_{\R/\Lambda}}(V^\mathrm{eff})$ 
is independent of any choice of arguments, and so is canonically associated with the real equivariant vector bundle $V$.
\end{remark}

\begin{remark}
If the $\R/\Lambda$-equivariant vector bundle $V$ is oriented, one endows $V_{[\mathbf{0}]}$ with the orientation compatible with those of $V$ and $V^{\mathrm{eff}}$. By this procedure, applied to the tangent bundle of an oriented $\R/\Lambda$-manifold $M$, one gets a canonical orientation for $\mathrm{Fix}(M)$ once a choice of arguments for the nonzero elements in the lattice $\Lambda$ has been fixed.
\end{remark}

\section{The (anti-)holomorphic sector for a complex torus action}
With this short reminder of equivariant cohomology for 1d Euclidean tori actions, we have set the stage to describe the Cartan complex and equivariant cohomology classes for the action of 2d flat tori equipped with a complex structure.
\par
By definition, these tori are given by the quotients $\C/\Lambda$ of $\C$ by two dimensional lattices $\Lambda \subset \C$, so they are the natural generalization of the Euclidean 1d tori $\R/\Lambda$ considered in the previous section. The quotients $\C/\Lambda$ have a natural structure of Lie groups and, as in the 1d case, we will denote their Lie algebra by $\mathfrak{t}_\Lambda$. Moreover $\C/\Lambda$, carries a holomorphic structure compatible with the group addition, so that complex tori are an example of holomorphic Lie groups. This gives the Lie algebra $\mathfrak{t}_\Lambda$ a complex Lie algebra structure that will allow us to give the complexified Cartan complex of a $\C/\Lambda$-action a holomorphic kick. The following statement is immediate.
\begin{lemma}\label{lemma:decomposition}
Let $M$ be a compact smooth manifold $M$ equipped with a smooth action by $\C/\Lambda\acts{M}$. The complex structure on $\mathfrak{t}_\Lambda$ gives a natural splitting $\mathfrak{t}_\Lambda \otimes_\R \C \cong {\mathfrak{t}_\Lambda}^{1,0} \oplus {\mathfrak{t}_\Lambda}^{0,1}$ inducing a decomposition
\[
\Omega^\bullet(M;\C)^{\C/\Lambda} \otimes_\C \mathrm{Sym}(({\mathfrak{t}_\Lambda}^{1,0})^\vee[-2]) \otimes_\C \mathrm{Sym}(({\mathfrak{t}_\Lambda}^{0,1})^\vee[-2]),
\]
of the complexified Cartan complex
\[
\Omega^\bullet(M;\C)^{\C/\Lambda} \otimes_\C \mathrm{Sym}((\mathfrak{t}_\Lambda \otimes_\R \C)^\vee[-2])
\]
 computing the equivariant cohomology of $\C/\Lambda \acts{M}$ with coefficients in $\C$.
This  realizes the complexified Cartan complex as the total complex of a tricomplex with
 $\Omega^{k-p-q}(M;\C)^{\C/\Lambda}\otimes_\C\mathrm{Sym}^p({\mathfrak{t}^{1,0}_\Lambda}^\vee[-2])\otimes_\C \mathrm{Sym}^q({\mathfrak{t}^{0,1}_\Lambda}^\vee[-2])$ in tridegree $(k,p,q)$. The differential of degree $(1,0,0)$ in this tricomplex is the de Rham differential; the differential of degree $(0,1,0)$ is the operator ${e_\Lambda}^\vee[-2] \iota_{v_{e_\Lambda}}$, where $({e_\Lambda},{e_\Lambda}^\vee)$ is a pair consisting of a $\C$-linear generator of ${\mathfrak{t}_\Lambda}^{1,0}$ and of its dual element in $({\mathfrak{t}_\Lambda}^{1,0})^\vee$, and $v_{e_\Lambda}$ is the complex vector field on $M$ corresponding to $e_\Lambda$ via the differential of the action; the differential of degree $(0,0,1)$ is the operator ${\overline{e}_\Lambda}^\vee[-2] \iota_{v_{\overline{e}_\Lambda}}$, where $({\overline{e}_\Lambda},{\overline{e}_\Lambda}^\vee)$ is a pair consisting of a $\C$-linear generator of ${\mathfrak{t}_\Lambda}^{0,1}$ and of its dual element in $({\mathfrak{t}_\Lambda}^{0,1})^\vee$.
\end{lemma} 
\begin{remark}\label{rem:xi-xibar}
The isomorphism $\mathrm{Lie}(\C) \xrightarrow{\sim} \mathfrak{t}_\Lambda$ induced by the projection $\C\to \C/\Lambda$ induces natural $\C$-linear generators for $\mathfrak{t}_\Lambda^{1,0}$ and $\mathfrak{t}_\Lambda^{0,1}$, given by the images of the complex invariant vector fields $\partial/\partial z$ and $\partial/\partial\overline{z}$, respectively. Denoting by $\xi$ and $\overline{\xi}$ the dual invariant 1-forms $dz$ and $d\overline{z}$ placed in degree 2, the complexified Cartan tricomplex is written
\[
(\Omega^\bullet(M;\C)^{\C/\Lambda}[\xi,\overline{\xi}];d_\mathrm{dR}, \xi\iota_{v_{\partial/\partial z}}, \overline{\xi}\iota_{v_{\partial/\partial \overline{z}}}).
\]
With respect to the given  trigrading, the variables $\xi$ and $\overline{\xi}$ have tridegree $(1,1,0)$ and $(1,0,1)$, respectively.
\end{remark}

{\color{purple}\begin{remark}
The somehow unusual trigrading is chosen so that the differentials have tridegree $(1,0,0), (0,1,0)$ and $(0,0,1)$, respectively. See Remark \ref{rem:grading-convention}.
\end{remark}}
By restricting the Cartan tricomplex only to the antiholomorphic (resp. holomorphic) part, i.e. by taking only ${\mathfrak{t}_\Lambda}^{0,1}$ (resp. ${\mathfrak{t}_\Lambda}^{1,0}$) instead of $\mathfrak{t}_\Lambda \otimes_\R \C$, and taking the associated total complex, we end up with the definition of the \emph{antiholomorphic} (resp. \emph{holomorphic}) \emph{sector} of the Cartan complex over $\C$. 
\begin{definition}
In the same assumptions as in Lemma \ref{lemma:decomposition}, the antiholomorphic sector of the complexified Cartan complex is the total complex associated with the bicomplex
\[
(\Omega^\bullet(M;\C)^{\C/\Lambda} \otimes_\C \mathrm{Sym}(({\mathfrak{t}_\Lambda}^{0,1})^\vee[-2]);d_\mathrm{dR},{\overline{e}_\Lambda}^\vee[-2] \iota_{v_{\overline{e}_\Lambda}}).
\]
Its total differential will be denoted by $\overline{\partial}_{\C/\Lambda}$ and its cohomology by the symbol $H^\bullet_{\C/\Lambda;\overline{\partial}}(M;\C)$. By changing ${\mathfrak{t}_\Lambda}^{0,1}$ into ${\mathfrak{t}_\Lambda}^{1,0}$ one obtains the definition of the holomorphic sector.
\end{definition}
\begin{remark}
In terms of the distinguished basis $\{\partial/\partial z,\partial/\partial\overline{z}\}$ of $\mathrm{Lie}(\C)\otimes \C$, the antiholomorphic sector of the Cartan complex over $\C$ is the total complex associated to the bicomplex
\[
(\Omega^\bullet(M;\C)^{\C/\Lambda}[\overline{\xi}];d_\mathrm{dR},\overline{\xi}\iota_{v_{\partial/\partial \overline{z}}}).
\]
\end{remark}
\subsection{$\C/\Lambda$-equivariant Chern classes}
 Exactly as in the $\R/\Lambda$ case, $\C/\Lambda$-equivariant complex line bundles over $\mathrm{Fix}(M)$ are equivalently pairs $(L,\chi)$ consisting of a complex line bundle $L$ over $\mathrm{Fix}(M)$ and a character $\chi: \C/\Lambda \to U(1)$, and the first equivariant Chern class of $(L,\chi)$ in the $\C/\Lambda$-equivariant Cartan complex is 
\[
c_{1,\C/\Lambda}(L,\chi) = c_1(L)+w_\chi[-2],
\]
where $w_\chi$ is the weight of $\chi$, i.e., the $\R$-linear map defined by the commutative diagram
\[
\begin{tikzcd}
\mathfrak{t}_\Lambda \cong \C \arrow[d, "\mathrm{proj}"'] \arrow[r, "2\pi i w_{\chi}"] & 2\pi i \R \arrow[d, "\mathrm{exp}(-)"] \\
\C/\Lambda\arrow[r, "\chi"']                                                          & U(1)                                  
\end{tikzcd}.
\]
Chern classes of higher rank $\C/\Lambda$-equivariant complex vector bundles are defined exactly as in the $\R/\Lambda$ setting: one first decomposes the bundle as the direct sum of its isotypic components, and then formally splits each of these as a direct sum of line bundles. This way one defines the equivariant Euler classes $\mathrm{eul}_{\C/\Lambda}(V^\mathrm{eff})$ and the normalized equivariant Euler classes $\widehat{\mathrm{eul}_{\C/\Lambda}}(V^\mathrm{eff})$  by generalizing Definitions \ref{def:equivariant-euler1} and \ref{def:normalized-equivariant-euler1}.

\begin{remark}\label{rem:notation-rho}
By means of the standard Hermitian pairing on $\C$, the dual lattice $\Lambda^\vee$ of characters of $\C/\Lambda$ is identified with $\Lambda$: every character of $\C/\Lambda$ is of the form
\[
\rho_{\lambda}(z)=\mathrm{exp}\left(\pi \dfrac{\lambda\overline{z}-\overline{\lambda}z}{\mathrm{vol}(\C/\Lambda)}\right),
\]
for some $\lambda\in \Lambda$.
The corresponding weight is 
\[
w_{\lambda}= \dfrac{\lambda d\overline{z}-\overline{\lambda}dz}{2i\mathrm{vol}(\C/\Lambda)}.
\]
The first equivariant Chern class of $(L,\rho_{\lambda})$ is given by
\[
c_{1,\C/\Lambda}(L,\rho_{\lambda}) = c_1(L) + \lambda\overline{\xi}_\Lambda-\overline{\lambda}\xi_\Lambda,
\]
where
\[
 \xi_\Lambda=\frac{\xi}{2i \mathrm{vol}(\C/\Lambda)}; \qquad 
\overline{\xi}_\Lambda=\frac{\overline{\xi}}{2i \mathrm{vol}(\C/\Lambda)}.
\]
\end{remark}
We will be particularly interested in the antiholomorphic part of the $\C/\Lambda$-equivariant Chern classes, i.e. the classes in the antiholomorphic sector obtained by evaluating the holomorphic parameter $\xi$ at $0$. By the splitting principle, these are determined by the antiholomorphic parts of the equivariant first Chern classes,

\begin{equation}
c^{\overline{\partial}}_{1,\C/\Lambda}(L,\rho_{\lambda})=c_{1,\C/\Lambda}(L,\chi)\vert_{\xi=0} = c_1(L) + \lambda\overline{\xi}_\Lambda.
\end{equation}
From this, one has the following immediate generalization of (\ref{eq:top-chern}, \ref{eq:normalized-top-chern}):
\begin{equation}\label{eq:c-top-with-zeta-antiholo}
c^{\overline{\partial}}_{\mathrm{top},\C/\Lambda}(E^\mathrm{eff}) 
=\underbrace{{\overline{\xi}_\Lambda}^{\rk E^\mathrm{eff}}\left(\prod_{\lambda\in \Lambda\setminus\{0\}}\lambda ^{\mathrm{rk} E_{\rho_\lambda}} \right)}_{wp^{\overline{\partial}}(E^{\mathrm{eff}})} \widehat{c_{\mathrm{top},\C/\Lambda}^{\overline{\partial}}}(E^\mathrm{eff}),
\end{equation}
where 
\[
\widehat{c_{\mathrm{top},\C/\Lambda}^{\overline{\partial}}}(E^\mathrm{eff}):=
\prod_{\lambda\in \Lambda\setminus\{0\}}\prod_{i=1}^{\mathrm{rk} E_{\rho_{\lambda}}}\left(1+\frac{\alpha_{i}(E_{\rho_{\lambda}})\overline{\xi}_\Lambda^{-1}}{\lambda}\right).
\]
\begin{remark}
The polynomial 
\[
wp^{\overline{\partial}}(E^{\mathrm{eff}})={\overline{\xi}_\Lambda}^{\rk E^\mathrm{eff}}\left(\prod_{\lambda\in \Lambda\setminus\{0\}}\lambda ^{\mathrm{rk} E_{\rho_\lambda}} \right)
\]
in the variable $\overline{\xi}_\Lambda$ is the weight polynomial of $E^{\mathrm{eff}}$ (or, more precisely, its complexification) evaluated at $\xi=0$. One calls it the antiholomorphic weight polynomial. By construction, it is a nonzero element in $H^\bullet_{\C/\Lambda;\overline{\partial}}(\mathrm{Fix}(M);\C)$.
\end{remark}
\begin{definition}
The degree zero element $\widehat{c_{\mathrm{top},\C/\Lambda}^{\overline{\partial}}}(E^\mathrm{eff})$ in the localization  $H^\bullet_{\C/\Lambda;\overline{\partial}}(\mathrm{Fix}(M);\C)_{(wp^{\overline{\partial}}(E^{\mathrm{eff}}))}$ is called the \emph{normalized antiholomorphic top Chern class} of $E^{\mathrm{eff}}$.
\end{definition}
\begin{remark}\label{rem:why-antiholo}
There is no particular reason to prefer the antiholomorphic sector over the holomorphic sector if not this: when $\Lambda=\Lambda_\tau$ is the lattice $\Z\oplus \Z\tau$, the association
\[
\begin{tikzcd}
\tau \arrow[r, maps to] & {c^{\overline{\partial}}_{1,\C/\Lambda_\tau}(L,\rho_{m+n\tau}) = c_1(L) + \left(m+n\tau\right)\overline{\xi}_{\Lambda_\tau}}
\end{tikzcd}
\]
is holomorphic in terms of the modular parameter $\tau$ rather than in terms of the conjugate parameter $\overline{\tau}$.
\end{remark}
 By analogy with the construction in Section \ref{R-equivariant-euler-class}, for real $\C/\Lambda$-equivariant bundles we have a notion of (normalized) equivariant Euler classes in the antiholomorphic sector for their effectively acted parts. 
 \begin{definition}\label{def:eul-antiholo}
 Let $V$ be a real $\C/\Lambda$-equivariant bundle on $\mathrm{Fix}(M)$ and let $V^{\mathrm{eff}}$ be its effectively acted subbundle. For a fixed choice of the arguments for the elements $\lambda\in \Lambda\setminus\{0\}$, the equivariant Euler class of $V^{\mathrm{eff}}$ in the antiholomorphic sector is the element in $H^\bullet_{\C/\lambda;\overline{\partial}}(\mathrm{Fix}(M);\C)$ defined by
 \[
 \mathrm{eul}^{\overline{\partial}}_{\C/\Lambda}(V^\mathrm{eff})
 =(i\overline{\xi}_\Lambda)^{\frac{\rk V^{\mathrm{eff}}}{2}}\left(\prod_{\lambda\in \Lambda\setminus\{0\}}\lambda ^{\frac{\mathrm{rk} (V^\mathrm{eff} \otimes \C))_{\rho_\lambda}}{2}} \right)\underbrace{\sqrt{\widehat{c^{\overline{\partial}}_{\mathrm{top},\R/\Lambda}}(V^\mathrm{eff} \otimes \C)}}_{\widehat{\mathrm{eul}^{\overline{\partial}}_{\C/\Lambda}}(V^\mathrm{eff})}.
 \]
 The invertible degree zero element $\widehat{\mathrm{eul}^{\overline{\partial}}_{\C/\Lambda}}(V^\mathrm{eff})$ in the localization of $H^\bullet_{\C/\lambda;\overline{\partial}}(\mathrm{Fix}(M);\C)$ at ${wp^{\overline{\partial}}(V^{\mathrm{eff}}\otimes_\R\C)}$ is called the normalized equivariant Euler class of $V^{\mathrm{eff}}$ in the antiholomorphic sector.
  \end{definition}
 \begin{remark}
 The normalized Euler class $\widehat{\mathrm{eul}^{\overline{\partial}}_{\C/\Lambda}}(V^\mathrm{eff})$ in the antiholomorphic sector is independent of the choice of arguments for the elements $\lambda$'s.
 \end{remark}

 \begin{remark}\label{rem:even-powers}
 Since the real vector bundle  $V^{\mathrm{eff}}$ carries a complex structure the nonzero Chern roots of its complexification $V^{\mathrm{eff}}\otimes\C$ come in opposite pairs. From
 \[
 \left(1+\frac{\alpha_{i}(E_{\rho_{\lambda}})\overline{\xi}_\Lambda^{-1}}{\lambda}\right)\left(1-\frac{\alpha_{i}(E_{\rho_{\lambda}})\overline{\xi}_\Lambda^{-1}}{\lambda}\right)=1-\frac{\alpha_{i}(E_{\rho_{\lambda}})^2\overline{\xi}_\Lambda^{-2}}{\lambda^2}
 \]
 we see that only even powers of $\overline{\xi}_\Lambda^{-1}$ appear in the expansion of $\widehat{\mathrm{eul}^{\overline{\partial}}_{\C/\Lambda}}(V^\mathrm{eff})$ as a polynomial in the variable $\overline{\xi}_\Lambda^{-1}$. 
 \end{remark}
 
 The following statement is immediate from the definitions. As it will be used several times in what follows, we make it stand out as a Lemma.
 \begin{lemma}\label{lem:anti-holo-eul}
 In the same notation as in Definition \ref{def:eul-antiholo}, the following identities hold:
 \[
 \mathrm{eul}^{\overline{\partial}}_{\C/\Lambda}(V^\mathrm{eff})=\mathrm{eul}_{\C/\Lambda}(V^\mathrm{eff})\bigr\vert_{\xi=0}
 \]
 and
 \[
 \widehat{\mathrm{eul}^{\overline{\partial}}_{\C/\Lambda}}(V^\mathrm{eff})=\widehat{\mathrm{eul}_{\C/\Lambda}}(V^\mathrm{eff})\bigr\vert_{\xi=0}.
 \]
 \end{lemma}

\section{The antiholomorphic localization theorem}
Localization techniques are a very common and powerful tool in equivariant cohomology. We will briefly recall the main theorem, the Atiyah-Bott localization theorem for a $d$-dimensional torus actions \cite{ATIYAHmoment} stated in its Euclidean version, i.e., for flat tori of the form $\R^d/\Lambda$,  and then show how for complex tori $\C/\Lambda$ the result continues to hold even when we restrict our attention to the antiholomorphic sector.
\subsection{The localization formula for a Euclidean torus actions}
Let  $\R^d/\Lambda$ be a $d$-dimensional Euclidean torus, with Lie algebra $\mathfrak{t}_\Lambda$, and let $M$ be a smooth compact connected oriented finite dimensional manifold endowed with a smooth $\R^d/\Lambda$-action. Assume $\mathrm{Fix}(M)$ is a nonempty smooth submanifold of $M$, and denote by $\nu$ the normal bundle to the inclusion $\iota\colon\mathrm{Fix}(M)\hookrightarrow M$. 
The $\R^d/\Lambda$-action on $\nu$ is completely effective, i.e., $\nu_{\{\mathbf{0}\}}=0$ and so, by the same argument used above in the case $d=1$, the real bundle $\nu$ carries a complex structure. In particular, it is of even rank and orientable.
Once an orientation is fixed, one has a well defined equivariant Euler class for $\nu$, that can be written as
\[
\mathrm{eul}_{\R^d/\Lambda}(\nu)=wp(\nu)\widehat{ \mathrm{eul}_{\R^d/\Lambda}}(\nu)
\]
with $wp(\nu)$ a degree $2\rk \nu$ element in $\mathrm{Sym}(\mathfrak{t}_\Lambda^\vee[-2])$, called the weight polynomial, and $\widehat{ \mathrm{eul}_{\R^d/\Lambda}}(\nu)$ a degree zero invertible element in the $\R^d/\Lambda$-equivariant cohomology of $\mathrm{Fix}(M)$ localized at $wp(\nu)$, of the form $1+\cdots$.
One orients $\mathrm{Fix}(M)$ in such a way that its orientation is compatible with those on $M$ and on $\nu$. 
Having fixed this notation, the Atiyah-Bott localization theorem reads as follows.
\begin{theorem}[Localization isomorphism]\label{thm:localization-iso}
After localization at the weight polynomial $wp(\nu)$, the equivariant cohomologies of $M$ and $\mathrm{Fix}(M)$ become isomorphic  $\mathrm{Sym}(\mathfrak{t}_\Lambda^\vee[-2])_{(wp(\nu))}$-modules. An explicit isomorphism is given by:
\[
\begin{tikzcd}
H_{\R^d/\Lambda}^\bullet(M,\R)_{(wp(\nu))} \arrow[rr, "\mathrm{eul}_{\R^d/\Lambda}(\nu)^{-1}\cdot \iota^*"] &  & {H_{\R^d/\Lambda}^{\bullet}(\mathrm{Fix}(M),\R)_{(wp(\nu))}}[-\mathrm{rk}\nu].
\end{tikzcd}
\]
The inverse isomorphism is given by the equivariant pushforward $\iota_\ast$. \end{theorem}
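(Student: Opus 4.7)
The plan is to reduce the claim to two classical ingredients, both compatible with the Euclidean-torus formulation adopted here. First, I would establish the \emph{self-intersection formula}
\[
\iota^*\iota_*\alpha \;=\; \mathrm{eul}_{\R^d/\Lambda}(\nu)\cdot\alpha
\]
for every $\alpha\in H^\bullet_{\R^d/\Lambda}(\mathrm{Fix}(M);\R)$. This is obtained by fixing a $\R^d/\Lambda$-invariant Riemannian metric on $M$: the induced equivariant tubular neighborhood identifies an invariant open neighborhood of $\mathrm{Fix}(M)$ with the total space of the normal bundle $\nu$, so that $\iota_*$ factors through the equivariant Thom isomorphism of $\nu$, whose pullback along the zero section is, by the way the equivariant Euler class has been set up in Section \ref{R-equivariant-euler-class}, precisely $\mathrm{eul}_{\R^d/\Lambda}(\nu)$; the equivariant projection formula then yields the displayed identity. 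Using the factorization $\mathrm{eul}_{\R^d/\Lambda}(\nu)=wp(\nu)\widehat{\mathrm{eul}_{\R^d/\Lambda}}(\nu)$ with the second factor invertible in the localization, this gives $\mathrm{eul}_{\R^d/\Lambda}(\nu)^{-1}\iota^*\iota_*=\mathrm{id}$ after inverting $wp(\nu)$.

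Second, I would show that $\iota^*$ becomes injective after the same localization, so that $\iota_*\bigl(\mathrm{eul}_{\R^d/\Lambda}(\nu)^{-1}\iota^*\bigr)$ is the identity as well and the two maps are mutually inverse as $\mathrm{Sym}(\mathfrak{t}_\Lambda^\vee[-2])_{(wp(\nu))}$-module homomorphisms. I would run the equivariant Mayer--Vietoris sequence for the cover of $M$ by the invariant tubular neighborhood of $\mathrm{Fix}(M)$ and the open complement $U:=M\setminus\mathrm{Fix}(M)$. Since the tubular neighborhood deformation-retracts equivariantly onto $\mathrm{Fix}(M)$, both the kernel and the cokernel of $\iota^*$ are controlled by $H^\bullet_{\R^d/\Lambda}(U;\R)$, and it suffices to show that this module is annihilated after localization at $wp(\nu)$.

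The latter follows from the standard \emph{vanishing principle for fixed-point-free torus actions}: if $\R^d/\Lambda$ acts on a compact manifold $N$ with $\mathrm{Fix}(N)=\emptyset$, then $H^\bullet_{\R^d/\Lambda}(N;\R)$ is annihilated by a nonzero element of $\mathrm{Sym}(\mathfrak{t}_\Lambda^\vee[-2])$ built from nontrivial weights. I would prove this by a slice-by-slice induction: using the invariant metric, $U$ is covered by equivariant tubular neighborhoods of orbits $\R^d/\Lambda\times_H F$, Mayer--Vietoris reduces the question to a single orbit type, and on each such piece the equivariant cohomology is the $H$-equivariant cohomology of the slice $F$, which is killed by the weights of $\mathfrak{t}_\Lambda/\mathfrak{h}$. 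The main obstacle I foresee is matching the annihilating weights produced by this slice induction with the specific weights appearing in $wp(\nu)$: the key is the Atiyah--Bott observation that at an orbit with stabilizer $H$ the relevant slice weights are precisely the $H$-weights of the normal bundle to $\mathrm{Fix}(H)$, which contain the weights of $\nu$ over $\mathrm{Fix}(M)\subseteq\mathrm{Fix}(H)$; after enlarging the localization if necessary, every annihilator produced by the slice argument then divides a power of $wp(\nu)$. Transcribing this carefully while keeping track of the Euclidean rescalings $u_\Lambda=\mathrm{vol}(\R^d/\Lambda)^{-2}u$ introduced in the paper is the step requiring the most care.
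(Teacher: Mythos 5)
The paper itself does not prove Theorem \ref{thm:localization-iso}: it is recalled as the classical Atiyah--Bott localization theorem for torus actions, transcribed in the Euclidean normalization, and the only content added is the remark that the isomorphism is realized by an explicit morphism of Cartan complexes. So your proposal supplies a proof where the paper supplies a citation, and what you sketch is precisely the standard Atiyah--Bott argument. Its first half is sound: the invariant tubular neighborhood, the equivariant Thom isomorphism and the projection formula give $\iota^*\iota_*=\mathrm{eul}_{\R^d/\Lambda}(\nu)\cdot(-)$, and since inverting $wp(\nu)$ makes $\mathrm{eul}_{\R^d/\Lambda}(\nu)=wp(\nu)\,\widehat{\mathrm{eul}_{\R^d/\Lambda}}(\nu)$ a unit, you get $\bigl(\mathrm{eul}_{\R^d/\Lambda}(\nu)^{-1}\iota^*\bigr)\circ\iota_*=\mathrm{id}$; combined with injectivity of $\iota^*$ after localization, this does force the two maps to be mutually inverse. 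The Euclidean rescaling $u_\Lambda$ is purely cosmetic and does not affect any of this.

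The soft spot is the final step of the vanishing argument, which you have correctly located but not actually closed. The slice induction shows that $H^\bullet_{\R^d/\Lambda}(M\setminus\mathrm{Fix}(M);\R)$ (and likewise the cohomology of the punctured tubular neighborhood entering Mayer--Vietoris) is supported on the union of the Lie algebras $\mathfrak{h}$ of the finitely many proper stabilizers; to conclude after localizing \emph{only} at $wp(\nu)$ you need each such $\mathfrak{h}$ to be contained in the kernel of some weight occurring in $\nu$. Your justification --- that the slice weights at an orbit with stabilizer $H$ contain the weights of $\nu$ because $\mathrm{Fix}(M)\subseteq\mathrm{Fix}(H)$ --- silently assumes that the component of $\mathrm{Fix}(H)$ through that orbit meets $\mathrm{Fix}(M)$, which need not happen; and your fallback, ``enlarging the localization if necessary,'' changes the statement being proved. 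For $d=1$ this is a non-issue (proper closed subgroups of a circle are finite, so the complement's equivariant cohomology is annihilated by a power of $u$ and dies after inverting the nonzero polynomial $wp(\nu)$), and it is harmless for the integration formulas the paper actually uses, which only require passing to the fraction field as in Atiyah--Bott. But as a proof of the theorem exactly as stated for general $d$ you should either verify that every proper isotropy algebra is annihilated by a weight of $\nu$, or localize at the product of all isotropy weights.
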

{\color{purple} 
\begin{remark}
In what follows we are not making use of pushforwards in equivariant cohomology, so the last line in the statement of Theorem \ref{thm:localization-iso} is included only for completeness. We address the interested reader to \cite{ATIYAHmoment} for details on pushforwards in equivariant cohomology.
\end{remark}
}
\begin{remark}
The localization isomorphism is induced by a morphism between the Cartan complexes. To realize such a morphism one only needs to choose closed forms representatives in $\Omega^\bullet(\mathrm{Fix}(M);\C)^{\R^d/\Lambda}$ for the Chern classes of the normal bundle $\nu$, endowed with a chosen complex structure. Such a choice determines a representative for $\mathrm{eul}_{\R^d/\Lambda}(\nu)^{-1}$ in $\Omega^\bullet(\mathrm{Fix}(M);\R)^{\R^d/\Lambda}\otimes_\R \mathrm{Sym}(\mathfrak{t}_\Lambda^\vee[-2])_{(wp(\nu))}$, which we will denote by the same symbol  $\mathrm{eul}_{\R/\Lambda}(\nu)^{-1}$, and one has a morphism of differential graded $\mathrm{Sym}(\mathfrak{t}_\Lambda^\vee[-2])_{(wp(\nu))}$-modules
\[
\begin{tikzcd}
\Omega^\bullet(M,\R)^{\R^d/\Lambda}\otimes_\R\mathrm{Sym}(\mathfrak{t}_\Lambda^\vee[-2])_{(wp(\nu))} \arrow[d, "\mathrm{eul}_{\R^d/\Lambda}(\nu)^{-1}\cdot \iota^*"] 
\\
{\Omega^{\bullet}(\mathrm{Fix}(M),\R)^{\R^d/\Lambda}\otimes_\R\mathrm{Sym}(\mathfrak{t}_\Lambda^\vee[-2])_{(wp(\nu))}}[-\mathrm{rk}\nu].
\end{tikzcd}
\]
The Atiyah-Bott theorem then says that this morphism is a quasi-isomorphism.
\end{remark}
The fact that the inverse of $\mathrm{eul}_{\R^d/\Lambda}(\nu)^{-1}\cdot \iota^*$ is the equivariant pushforward $\iota_*$ has the following important consequence.
\begin{corollary}[Localization formula]\label{cor:localization-formula-general}
Let $\tilde{\omega}\in (\Omega^\bullet(M;\R)^{\R^d/\Lambda})_{wp(\nu)}$ be an equivariantly closed form in the localization of the Cartan complex of $M$. Then
\begin{equation}\label{eq:eq-localization-general}
\int_{M}\tilde{\omega} = \int_{\mathrm{Fix}(M)}\frac{\iota^*\tilde{\omega}}{
\mathrm{eul}_{\R^d/\Lambda}(\nu)}.
\end{equation}
\end{corollary}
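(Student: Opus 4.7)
The plan is to extract the corollary directly from Theorem \ref{thm:localization-iso}. That theorem asserts that the two $\mathrm{Sym}(\mathfrak{t}_\Lambda^\vee[-2])_{(wp(\nu))}$-linear maps $\mathrm{eul}_{\R^d/\Lambda}(\nu)^{-1}\cdot \iota^*$ and the equivariant pushforward $\iota_*$ are mutually inverse isomorphisms between the Cartan complexes of $M$ and $\mathrm{Fix}(M)$ after localization at $wp(\nu)$. Consequently, for any equivariantly closed representative $\tilde{\omega}$, one has the identity
\[
[\tilde{\omega}] = \iota_*\!\left(\frac{\iota^*\tilde{\omega}}{\mathrm{eul}_{\R^d/\Lambda}(\nu)}\right)
\]
in the localized equivariant cohomology of $M$.

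The second ingredient is the functoriality of the equivariant pushforward along the commutative triangle of constant maps $p\colon M\to\mathrm{pt}$, $q\colon \mathrm{Fix}(M)\to\mathrm{pt}$, and $\iota\colon \mathrm{Fix}(M)\hookrightarrow M$: since $q=p\circ \iota$, we have $q_\ast = p_\ast\circ \iota_\ast$. Pushforward along the constant map to a point is precisely integration over the corresponding manifold, extended $\mathrm{Sym}(\mathfrak{t}_\Lambda^\vee[-2])_{(wp(\nu))}$-linearly. Applying $p_\ast = \int_M$ to both sides of the displayed equality therefore yields
\[
\int_M \tilde{\omega} = \int_M \iota_\ast\!\left(\frac{\iota^*\tilde{\omega}}{\mathrm{eul}_{\R^d/\Lambda}(\nu)}\right) = \int_{\mathrm{Fix}(M)} \frac{\iota^*\tilde{\omega}}{\mathrm{eul}_{\R^d/\Lambda}(\nu)},
\]
which is the claimed localization formula.

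The only non-formal ingredient is Theorem \ref{thm:localization-iso}; once that is granted, the rest is the standard compatibility of integration with closed embeddings, and the main thing to justify carefully is that both sides make sense as elements of $\mathrm{Sym}(\mathfrak{t}_\Lambda^\vee[-2])_{(wp(\nu))}$. Concretely, one should check that the integration pairing $\int_M\colon H^\bullet_{\R^d/\Lambda}(M;\R)_{(wp(\nu))} \to \mathrm{Sym}(\mathfrak{t}_\Lambda^\vee[-2])_{(wp(\nu))}$ is well defined, which follows by extending scalars from the classical $\int_M\colon H^\bullet(M;\R)\to\R$, and that the identification $p_\ast = \int_M$ is compatible with the chosen orientations of $M$, $\mathrm{Fix}(M)$, and $\nu$, which is precisely the content of the orientation convention adopted just before the statement of Theorem \ref{thm:localization-iso}. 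No further obstacle arises.
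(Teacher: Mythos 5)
Your argument is correct and is exactly the one the paper intends: the corollary is introduced with the remark that the inverse of $\mathrm{eul}_{\R^d/\Lambda}(\nu)^{-1}\cdot\iota^*$ is the equivariant pushforward $\iota_*$, so that $[\tilde{\omega}]=\iota_*\bigl(\iota^*\tilde{\omega}/\mathrm{eul}_{\R^d/\Lambda}(\nu)\bigr)$, and one concludes by integrating over $M$ and using $p_*\circ\iota_*=q_*$. Your write-up makes the functoriality step and the orientation/scalar-extension checks explicit, but it is the same proof.
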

Corollary \ref{cor:localization-formula-general} is often used in the following version, to compute integrals of invariant forms on $M$.
\begin{corollary}\label{cor:localization-formula}
Let $\omega\in \Omega^{\dim M}(M;\R)^{\R^d/\Lambda}$ be an invariant top degree form on $M$. Assume one has an equivariantly closed extension $\tilde{\omega}\in (\Omega^\bullet(M,\R)^{\R^d/\Lambda}\otimes\mathrm{Sym}(\mathfrak{t}_\Lambda^\vee[-2]))^{\dim M}$ of $\omega$. Then
\begin{equation}\label{eq:eq-localization}
\int_{M}\omega = \int_{\mathrm{Fix}(M)}\frac{\iota^*\tilde{\omega}}{
\mathrm{eul}_{\R^d/\Lambda}(\nu)}.
\end{equation}
\end{corollary}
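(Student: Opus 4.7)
The plan is to deduce Corollary~\ref{cor:localization-formula} directly from Corollary~\ref{cor:localization-formula-general} applied to $\tilde{\omega}$. Since $\tilde{\omega}$ is equivariantly closed in the unlocalized Cartan complex, it remains so after localizing at $wp(\nu)$, so the hypothesis of the general formula is satisfied. Applying it to $\tilde{\omega}$ gives
\[
\int_M\tilde{\omega}\;=\;\int_{\mathrm{Fix}(M)}\frac{\iota^{*}\tilde{\omega}}{\mathrm{eul}_{\R^d/\Lambda}(\nu)},
\]
so the only substantive task is to identify the left-hand side with the ordinary integral $\int_M\omega$.

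For this I would decompose $\tilde{\omega}=\sum_{k+l=\dim M}\tilde{\omega}_{k,l}$ into homogeneous bidegree components, with $\tilde{\omega}_{k,l}\in \Omega^{k-l}(M;\R)^{\R^d/\Lambda}\otimes_\R\mathrm{Sym}^l(\mathfrak{t}_\Lambda^\vee[-2])$. The equivariant pushforward along $\pi\colon M\to\mathrm{pt}$ underlying the notation $\int_M$ in Corollary~\ref{cor:localization-formula-general} extends ordinary fibre integration: on a pure tensor $\alpha\otimes P$ with $P\in\mathrm{Sym}^l(\mathfrak{t}_\Lambda^\vee[-2])$ it acts as $\bigl(\int_M \alpha\bigr)\,P$. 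Hence only summands whose de Rham factor has top degree $\dim M$ survive, i.e. those with $k-l=\dim M$; combined with $k+l=\dim M$, this forces $l=0$ and $k=\dim M$. The bidegree-$(\dim M,0)$ component of $\tilde{\omega}$ is precisely the evaluation $\tilde{\omega}\vert_{0}$, which by hypothesis equals $\omega$, so $\int_M\tilde{\omega}=\int_M\tilde{\omega}_{\dim M,0}=\int_M\omega$.

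Putting the two steps together produces identity~(\ref{eq:eq-localization}). The main point — really a bookkeeping remark rather than a genuine obstacle — is the identification of the equivariant pushforward $\pi_{*}$ with fibre integration on the $\mathrm{Sym}^{0}$-sector of the Cartan complex, so that all $\mathrm{Sym}^{>0}$-components of any Cartan extension of $\omega$ carry non-top de Rham degree and are automatically invisible to $\int_M$. Once this is accepted, no further geometric input is required beyond the general localization theorem already proved.
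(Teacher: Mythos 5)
Your argument is correct and is exactly the (implicit) reasoning behind the paper's statement, which is presented without proof as an immediate reformulation of Corollary~\ref{cor:localization-formula-general}: apply the general formula to $\tilde{\omega}$ and observe that fibre integration over $M$ only sees the bidegree-$(\dim M,0)$ component $\tilde{\omega}\vert_0=\omega$. Nothing further is needed.
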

\begin{remark}\label{rem:independent}
In the particular setting of Corollary \ref{cor:localization-formula}, the localization formula (\ref{eq:eq-localization}) tells us that the term on its right hand side, which is a priori an element in the $\R$-algebra $\mathrm{Sym}(\mathfrak{t}_\Lambda^\vee[-2])_{(wp(\nu))}$, is actually a constant, i.e., an element of $\R$. Also notice that despite the right-hand side in (\ref{eq:eq-localization}) appears on first sight to depend on the choice of an orientation of $\nu$ it actually does not depend on it, as the orientation of $\mathrm{Fix}(M)$ is not fixed a priori but is determined by that of $\nu$ in such a way that they are jointly compatible with the orientation of $M$.
\end{remark}
\begin{remark}
When $d=1$, one can use (\ref{eul:topchern}) to write the localization formula (\ref{eq:eq-localization}) as
\begin{equation}\label{eq:eq-localization-with-zeta}
\int_{M}\omega =(iu_\Lambda)^{-\frac{\rk \nu}{2}}
\left(\prod_{\lambda\in \Lambda\setminus\{0\}}\lambda ^{-\frac{\mathrm{rk} \nu_{\rho_\lambda}}{2}} \right)\int_{\mathrm{Fix}(M)}\frac{\iota^*\widetilde{\omega}}{
{\widehat{\mathrm{eul}_{\mathrm{top},\R/\Lambda}}(\nu)}}.
\end{equation}
The right hand side of (\ref{eq:eq-localization-with-zeta}) a priori depends on the choice of the arguments for the elements $\lambda\in \Lambda\setminus\{0\}$, and it is actually independent of it due to the same argument as in remark \ref{rem:independent}.
\end{remark}

\subsection{The antiholomorphic localization theorem}
Let us now consider complex tori $\C/\Lambda$. In this situation, Theorem \ref{thm:localization-iso} becomes the following.
\begin{theorem}[Localization Isomorphism in the Antiholomorphic Sector]\label{thm:localization-iso-antihol}
After localization at the antiholomorphic weight polynomial $wp^{\overline{\partial}}(\nu)$, the antiholomorphic sectors of equivariant cohomologies of $M$ and $\mathrm{Fix}(M)$ become isomorphic $\C[\overline{\xi}]_{(wp^{\overline{\partial}}(\nu))}$-modules. An explicit isomorphism is given by:
\[
\begin{tikzcd}
H_{\C/\Lambda;\overline{\partial}}^\bullet(M,\C)_{(wp^{\overline{\partial}}(\nu))} \arrow[rr, "\mathrm{eul}_{\C/\Lambda}^{\overline{\partial}}(\nu)^{-1}\cdot \iota^*"] &  & {H_{\C/\Lambda;\overline{\partial}}^{\bullet}(\mathrm{Fix}(M),\C)_{(wp^{\overline{\partial}}(\nu))}}[-\mathrm{rk}\nu].
\end{tikzcd}
\]
The inverse isomorphism is given by the restriction of the equivariant pushforward $\iota_*$ to the antiholomorphic sector.
\end{theorem}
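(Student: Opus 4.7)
The plan is to deduce the antiholomorphic statement from its full complexified counterpart by passing to a suitable quotient complex. Since $\C/\Lambda$ is a $2$-dimensional Euclidean torus, Theorem \ref{thm:localization-iso} (extended to $\C$-coefficients) directly provides a quasi-isomorphism $\mathrm{eul}_{\C/\Lambda}(\nu)^{-1}\cdot \iota^*$ between the complexified Cartan complexes of $M$ and of $\mathrm{Fix}(M)$ after localizing at the full weight polynomial $wp(\nu)$, with inverse the equivariant pushforward $\iota_*$. The goal is then to show that this situation descends cleanly to the antiholomorphic sector by setting $\xi = 0$.

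The first step is to realize the antiholomorphic Cartan complex as a quotient of the full complexified one. Multiplication by the degree-$2$ variable $\xi$ commutes with the full Cartan differential $d_{\mathrm{dR}}+\xi\,\iota_{v_{\partial/\partial z}}+\overline{\xi}\,\iota_{v_{\partial/\partial \overline{z}}}$, so the principal ideal $\xi \cdot C^\bullet_{\C/\Lambda}$ is a subcomplex, the quotient is exactly $C^\bullet_{\C/\Lambda;\overline{\partial}}$, and multiplication by $\xi$ identifies the subcomplex with a shifted copy of $C^\bullet_{\C/\Lambda}$. One next checks, using $w_\lambda = (\lambda\,d\overline{z}-\overline{\lambda}\,dz)/(2i\,\mathrm{vol}(\C/\Lambda))$ together with $\nu_{\mathbf{0}}=0$, that $wp(\nu)$ reduces modulo $\xi$ to $wp^{\overline{\partial}}(\nu)$, and that $\mathrm{eul}_{\C/\Lambda}(\nu)$ reduces to $\mathrm{eul}^{\overline{\partial}}_{\C/\Lambda}(\nu)$ (as already observed in Remark \ref{rem:anti-holo-eul}). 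Since both $\mathrm{eul}_{\C/\Lambda}(\nu)^{-1}\cdot\iota^*$ and $\iota_*$ are $\C[\xi,\overline{\xi}]$-linear morphisms of complexes, they preserve the subcomplex $\xi \cdot C^\bullet_{\C/\Lambda}$ and descend to the antiholomorphic quotient, where they produce precisely $\mathrm{eul}^{\overline{\partial}}_{\C/\Lambda}(\nu)^{-1}\cdot\iota^*$ and the antiholomorphic restriction of $\iota_*$.

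Combining these ingredients, for both $M$ and $\mathrm{Fix}(M)$ one obtains a short exact sequence of complexes
\[
0\to \xi\cdot C^\bullet_{\C/\Lambda}(-)_{(wp(\nu))}\to C^\bullet_{\C/\Lambda}(-)_{(wp(\nu))}\to C^\bullet_{\C/\Lambda;\overline{\partial}}(-)_{(wp^{\overline{\partial}}(\nu))}\to 0,
\]
where the identification of the rightmost term uses that localization commutes with quotienting by $\xi$, since the ring surjection $\C[\xi,\overline{\xi}]\to \C[\overline{\xi}]$ sends $wp(\nu)$ to $wp^{\overline{\partial}}(\nu)$. These assemble into a commutative ladder with vertical maps induced by $\mathrm{eul}^{-1}_{\C/\Lambda}(\nu)\cdot\iota^*$; the middle vertical map is a quasi-isomorphism by the full Atiyah-Bott theorem, and the leftmost one is just the middle one rewritten via the multiplication-by-$\xi$ identification, hence also a quasi-isomorphism. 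A direct five-lemma argument on the induced long exact sequences in cohomology then shows that the rightmost vertical map is a quasi-isomorphism too, which is the antiholomorphic localization isomorphism; the same argument applied to $\iota_*$ identifies its antiholomorphic restriction as the inverse. The main obstacle I expect is the chain-level bookkeeping needed to ensure that all the compatibilities above hold strictly (not merely up to coboundary), so that the descended chain maps are genuinely well defined; once that is verified, the remaining argument is purely homological algebra.
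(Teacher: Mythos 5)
Your proposal follows essentially the same route as the paper: realize the antiholomorphic sector as the quotient of the full complexified Cartan complex by the subcomplex $\xi\cdot C^\bullet_{\C/\Lambda}$, check that $wp(\nu)$ and $\mathrm{eul}_{\C/\Lambda}(\nu)$ reduce modulo $\xi$ to their antiholomorphic counterparts, assemble the morphism of short exact sequences, and conclude by the five lemma from the full Atiyah--Bott theorem. The only point where the paper does a bit more work than your sketch is in making sense of ``the restriction of $\iota_*$ to the antiholomorphic sector'': since the sector is a quotient rather than a subspace, the paper defines the antiholomorphic pushforward by choosing a linear section $\sigma$ of the evaluation map on $H^\bullet_{\C/\Lambda}(\mathrm{Fix}(M),\C)_{(wp(\nu))}$ (which is manifestly surjective there because the differential reduces to de Rham) and then invokes uniqueness of the inverse to show independence of $\sigma$ --- a detail worth spelling out in your writeup.
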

\begin{proof}

In terms of the distinguished variables $\xi$ and $\overline{\xi}$ introduced in Remark \ref{rem:xi-xibar},
 the localization quasi-isomorphism is written as the quasi isomorphism of differential graded $\C[\xi,\overline{\xi}]_{(wp(\nu))}$-modules
\[
\begin{tikzcd}
\Omega^\bullet(M,\C)^{\C/\Lambda}[\xi,\overline{\xi}]_{(wp(\nu))} \arrow[rr, "\mathrm{eul}_{\C/\Lambda}(\nu)^{-1}\cdot \iota^*"] 
&&
{\Omega^{\bullet}(\mathrm{Fix}(M),\C)^{\C/\Lambda}[\xi,\overline{\xi}]_{(wp(\nu))}}[-\mathrm{rk}\nu].
\end{tikzcd}
\]
Evaluation at $\xi=0$ induces a surjective homomorphism 
\[
\C[\xi,\overline{\xi}]_{(wp(\nu))}\xrightarrow{\vert_{\xi=0}} \C[\overline{\xi}]_{(wp^{\overline{\partial}}(\nu))}.
\]
From this we get the morphism of short exact sequences of complexes
\[
\begin{tikzcd}
0 \arrow[d]                                                                                                                                                               &  & 0 \arrow[d]                                                                                             \\
{\xi\Omega^\bullet(M;\C)^{\C/\Lambda}[\xi,\overline{\xi}]_{(wp(\nu))}} \arrow[d] \arrow[rr, "\mathrm{eul}_{\C/\Lambda}(\nu)^{-1}\cdot \iota^*"]                                                                                  &  & {\xi\Omega^\bullet(\mathrm{Fix}(M);\C)[\xi,\overline{\xi}]_{(wp(\nu))}[-\mathrm{rk}\nu]} \arrow[d]                                                                              \\
{\Omega^\bullet(M;\C)^{\C/\Lambda}[\xi,\overline{\xi}]_{(wp(\nu))}} \arrow[rr, "\mathrm{eul}_{\C/\Lambda}(\nu)^{-1}\cdot \iota^*"] \arrow[d, "\vert_{\xi=0}"']     &  & {\Omega^\bullet(\mathrm{Fix}(M);\C)[\xi,\overline{\xi}]_{(wp(\nu))}[-\mathrm{rk}\nu]} \arrow[d, "\vert_{\xi=0}"] \\
{\Omega^\bullet(M;\C)^{\C/\Lambda}[\overline{\xi}]_{(wp^{\overline{\partial}}(\nu))}} \arrow[rr, "\mathrm{eul}^{\overline{\partial}}_{\C/\Lambda}(\nu)^{-1}\cdot \iota^*"] \arrow[d] &  & {\Omega^\bullet(\mathrm{Fix}(M);\C)[\overline{\xi}]_{(wp^{\overline{\partial}}(\nu))}[-\mathrm{rk}\nu]} \arrow[d]         \\
0                                                                                                                                                                         &  & 0,                                                                                                      
\end{tikzcd}
\]
where the commutativity of the bottom square follows from Lemma \ref{lem:anti-holo-eul}.

Since the first two horizontal arrows are quasi-isomorphisms by the Atiyah-Bott localization theorem, so is the third one. This proves the first part of the statement. Since the differential on the Cartan complexes for the fixed point loci reduces to the de Rham differential acting trivially on the variables $\xi,\overline{\xi}$, the induced linear map
\[
H_{\C/\Lambda}^{\bullet}(\mathrm{Fix}(M),\C)_{(wp(\nu))}\xrightarrow{\vert_{\xi=0}}H_{\C/\Lambda;\overline{\partial}}^{\bullet}(\mathrm{Fix}(M),\C)_{(wp^{\overline{\partial}}(\nu))}
\]
is just the evaluation at $\xi=0$. Writing it as 
\[
H^\bullet(\mathrm{Fix}(M),\C)[\xi,\overline{\xi}]_{(wp(\nu))}\xrightarrow{\vert_{\xi=0}}H^\bullet(\mathrm{Fix}(M),\C)[\overline{\xi}]_{(wp^{\overline{\partial}}(\nu))}
\]
one sees it is manifestly surjective. By choosing a linear section $\sigma$ to this map one defines a morphism
\[
\iota_*^{\sigma}\colon {H_{\C/\Lambda;\overline{\partial}}^{\bullet}(\mathrm{Fix}(M),\C)_{(wp^{\overline{\partial}}(\nu))}}[-\mathrm{rk}\nu]\to 
H_{\C/\Lambda;\overline{\partial}}^\bullet(M,\C)_{(wp^{\overline{\partial}}(\nu))}
\]
as the composition
\[
\begin{tikzcd}
{H_{\C/\Lambda}^{\bullet}(\mathrm{Fix}(M),\C)_{(wp(\nu))}}[-\mathrm{rk}\nu]\ar[rr,"\iota_*"]&&H_{\C/\Lambda}^\bullet(M,\C)_{(wp(\nu))}\ar[d,"\vert_{\xi=0}"]
\\
{H_{\C/\Lambda;\overline{\partial}}^{\bullet}(\mathrm{Fix}(M),\C)_{(wp^{\overline{\partial}}(\nu))}}[-\mathrm{rk}\nu]\ar[u,"\sigma"]
&&
H_{\C/\Lambda;\overline{\partial}}^\bullet(M,\C)_{(wp^{\overline{\partial}}(\nu))}
\end{tikzcd}.
\]
The linear map $\iota_*^{\sigma}$ is an inverse to the isomorphism
\[
\begin{tikzcd}
H_{\C/\Lambda;\overline{\partial}}^\bullet(M,\C)_{(wp^{\overline{\partial}}(\nu))} \arrow[rr, "\mathrm{eul}_{\C/\Lambda}^{\overline{\partial}}(\nu)^{-1}\cdot \iota^*", "\sim"'] &  & {H_{\C/\Lambda;\overline{\partial}}^{\bullet}(\mathrm{Fix}(M),\C)_{(wp^{\overline{\partial}}(\nu))}}[-\mathrm{rk}\nu].
\end{tikzcd}
\]
Namely, we have in cohomology
\[
    \mathrm{eul}^{\overline{\partial}}_{\C/\Lambda}(\nu)^{-1}\cdot \iota^* \iota^\sigma_\ast
    =
    \left(\mathrm{eul}_{\C/\Lambda}(\nu)^{-1}\cdot \iota^*\iota_*\sigma \right)\bigr\vert_{\xi=0}=
    \sigma\bigr\vert_{\xi=0}=\mathrm{id}.
\]
By uniqueness of the inverse, this in particular shows that $\iota_*^{\sigma}$ is actually independent of the choice of the section $\sigma$ and so we can unambiguously write $\iota_\ast$ for it. By construction, the morphism
\[
\iota_*\colon {H_{\C/\Lambda;\overline{\partial}}^{\bullet}(\mathrm{Fix}(M),\C)_{(wp^{\overline{\partial}}(\nu))}}[-\mathrm{rk}\nu]\to 
H_{\C/\Lambda;\overline{\partial}}^\bullet(M,\C)_{(wp^{\overline{\partial}}(\nu))}
\]
serves as the pushforward map between the antiholomorphic sectors. 
\end{proof}

\begin{corollary}[Localization Formula in the Antiholomorphic Sector]\label{thm:localization-formula-antihol}
Let $\omega_{\overline{\xi}}$ be a $\overline{\partial}_{\C/\Lambda}$-closed form of degree $\dim M$ in $\Omega^\bullet(M;\C)^{\C/\Lambda}[\overline{\xi}]$. If $\omega_{\overline{\xi}}$ admits a degree $\dim M$ $d_{\C/\Lambda}$-closed extension to $\Omega^\bullet(M;\C)[\xi,\overline{\xi}]$
then
\[
\int_{M}\omega_{\overline{\xi}} =(i\overline{\xi}_\Lambda)^{-\frac{\rk \nu}{2}}
\left(\prod_{\lambda\in \Lambda\setminus\{0\}}\lambda ^{-\frac{\mathrm{rk} (\nu\otimes \C)_{\rho_\lambda}}{2}} \right) \int_{\mathrm{Fix}(M)}\frac{\iota^*\omega_{\overline{\xi}}}{
\widehat{\mathrm{eul}^{\overline{\partial}}_{\C/\Lambda}}(\nu)}.
\]
\end{corollary}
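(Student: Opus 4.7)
The plan is to reduce to the Euclidean localization formula of Corollary \ref{cor:localization-formula-general} (applied to $\C/\Lambda$ regarded as the $2$-dimensional flat torus $\R^{2}/\Lambda$) and then specialize the resulting identity at $\xi=0$. Let $\widetilde{\omega}\in(\Omega^{\bullet}(M;\C)^{\C/\Lambda}[\xi,\overline{\xi}])^{\dim M}$ denote the $d_{\C/\Lambda}$-closed extension of $\omega_{\overline{\xi}}$ provided by hypothesis, and feed it into Corollary \ref{cor:localization-formula-general} to obtain, in $\C[\xi,\overline{\xi}]_{(wp(\nu))}$,
\[
\int_{M}\widetilde{\omega}=\int_{\mathrm{Fix}(M)}\frac{\iota^{*}\widetilde{\omega}}{\mathrm{eul}_{\C/\Lambda}(\nu)}.
\]
Since $\xi$ and $\overline{\xi}$ both sit in bidegree $2$, only the $\xi^{0}\overline{\xi}^{0}$ component of $\widetilde{\omega}$ is of top de Rham degree, so the left-hand side is in fact a scalar; by the same observation applied to $\omega_{\overline{\xi}}=\widetilde{\omega}\bigr\vert_{\xi=0}$, this scalar coincides with $\int_{M}\omega_{\overline{\xi}}$.

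The next step is to apply the evaluation-at-$\xi=0$ homomorphism $\C[\xi,\overline{\xi}]_{(wp(\nu))}\twoheadrightarrow\C[\overline{\xi}]_{(wp^{\overline{\partial}}(\nu))}$, which is well defined precisely because $wp(\nu)\bigr\vert_{\xi=0}=wp^{\overline{\partial}}(\nu)$ is nonzero. The left-hand side is unaffected, while on the right the specialization commutes with the integral over the (fixed-point) de Rham part and sends $\mathrm{eul}_{\C/\Lambda}(\nu)$ to $\mathrm{eul}^{\overline{\partial}}_{\C/\Lambda}(\nu)$ by Remark \ref{rem:anti-holo-eul}. Inserting the factorization
\[
\mathrm{eul}^{\overline{\partial}}_{\C/\Lambda}(\nu)=(i\overline{\xi}_{\Lambda})^{\frac{\rk\nu}{2}}\Bigl(\prod_{\lambda\in\Lambda\setminus\{0\}}\lambda^{\frac{\rk(\nu\otimes\C)_{\rho_\lambda}}{2}}\Bigr)\,\widehat{\mathrm{eul}^{\overline{\partial}}_{\C/\Lambda}}(\nu)
\]
and pulling the scalar prefactor outside the integral then produces exactly the asserted identity.

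None of the steps are genuinely hard; the one point that requires attention is justifying that evaluation at $\xi=0$ survives passage to the localized Cartan complex, and this is precisely where the earlier careful bookkeeping pays off, since $wp(\nu)$ specializes to the nonzero element $wp^{\overline{\partial}}(\nu)$ so that no spurious pole is introduced. As an alternative route, one could bypass the specialization argument altogether and invoke Theorem \ref{thm:localization-iso-antihol} directly: its pushforward $\iota_{*}$, being the inverse of $\mathrm{eul}^{\overline{\partial}}_{\C/\Lambda}(\nu)^{-1}\cdot\iota^{*}$, sends $\iota^{*}\omega_{\overline{\xi}}/\mathrm{eul}^{\overline{\partial}}_{\C/\Lambda}(\nu)$ to $\omega_{\overline{\xi}}$, after which integrating both sides against the fundamental class of $M$ and extracting the prefactor as above yields the claim.
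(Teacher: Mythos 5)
Your proposal is correct and follows essentially the same route as the paper's own proof: apply the Euclidean localization formula to the $d_{\C/\Lambda}$-closed extension $\widetilde{\omega}$, observe that the left-hand side is independent of $\xi$ (you justify this by a degree count, the paper by noting it equals $\int_M\omega_{\overline{\xi}}$ directly), evaluate at $\xi=0$ using that this commutes with equivariant integration and that $\mathrm{eul}_{\C/\Lambda}(\nu)\bigr\vert_{\xi=0}=\mathrm{eul}^{\overline{\partial}}_{\C/\Lambda}(\nu)$, and finally extract the scalar prefactor from the factorization of the antiholomorphic Euler class. The alternative argument via Theorem \ref{thm:localization-iso-antihol} that you sketch at the end is also consistent with the paper's framework but is not the route taken there.
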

\begin{proof}
Let $\tilde{\omega}(\xi,\overline{\xi})$ be a degree $\dim M$ $d_{\C/\Lambda}$-closed extension of $\omega_{\overline{\xi}}$ to $\Omega^\bullet(M;\C)[\xi,\overline{\xi}]$.
By the localization formula (Corollary \ref{cor:localization-formula}) we have
\[
\int_{M}\omega_{\overline{\xi}} = \int_{\mathrm{Fix}(M)}\frac{\iota^*\widetilde{\omega}(\xi,\overline{\xi})}{
\mathrm{eul}_{\C/\Lambda}(\nu)}.
\]
Since the left hand side is independent of $\xi$, so is the right hand side. Therefore we can write
\[
\int_{M}\omega_{\overline{\xi}} =\left( \int_{\mathrm{Fix}(M)}\frac{\iota^*\widetilde{\omega}(\xi,\overline{\xi})}{
\mathrm{eul}_{\C/\Lambda}(\nu)}\right)\biggr\vert_{\xi=0}.
\]
The rational expression $\mathrm{eul}_{\C/\Lambda}(\nu)^{-1}\cdot \iota^*\widetilde{\omega}(\xi,\overline{\xi})$ is defined at $\xi=0$ and evaluation at $\xi=0$ commutes with equivariant integration (which in the Cartan model is just componentwise integration of the differential form parts). So, by Lemma \ref{lem:anti-holo-eul}, we find
\[
\int_{M} \omega_{\overline{\xi}} = \int_{\mathrm{Fix}(M)}\frac{\iota^*\widetilde{\omega}(\xi,\overline{\xi})}{
\mathrm{eul}_{\C/\Lambda}(\nu)}\biggr\vert_{\xi=0}
=
\int_{\mathrm{Fix}(M)}\frac{\iota^*\omega_{\overline{\xi}}}{
\mathrm{eul}_{\C/\Lambda}(\nu)\vert_{\xi=0}}
=
\int_{\mathrm{Fix}(M)}\frac{\iota^*\omega_{\overline{\xi}}}{
\mathrm{eul}^{\overline{\partial}}_{\C/\Lambda}(\nu)}.
\]

\end{proof}
\begin{example}
Let $M=S^2$ with its standard metric induced by the canonical embedding $S^2\hookrightarrow \R^3$, and let $\omega$ be its volume form. Let us make $U(1)\cong SO(2)$ act on $S^2$ by rotations around the vertical axis, i.e., via the embedding $SO(2)\hookrightarrow SO(3)$ given by $A\mapsto \mathrm{diag}(A,1)$. For any nonzero $\lambda\in \Lambda\subset\C$, use the character $\rho_\lambda\colon \C/\Lambda\to U(1)$ to define a $\C/\Lambda$-action on $S^2$. Using stereographic coordinates on $S^2$ and polar coordinates on $\R^2$, one sees that
\[
\omega_{\overline{\xi}}=\omega-\frac{4\pi}{1+\rho^2}\lambda \overline{\xi}_\Lambda
\]
is a degree 2 $\overline{\partial}_{\C/\Lambda}$-closed form and that 
\[
\omega(\xi,\overline{\xi})=\omega-\frac{4\pi }{1+\rho^2}(\lambda \overline{\xi}_\Lambda-\overline{\lambda} \xi_\Lambda)
\]
is a degree 2 $d_{\C/\Lambda}$-closed extension of $\omega_{\overline{\xi}}$. The $\C/\Lambda$-action on $S^2$ has exactly two fixed points, the North pole corresponding to $\rho=\infty$ and the South pole corresponding to $\rho=0$. Since the manifold of fixed points is 0-dimensional, the normalized equivariant Euler class in the antiholomorphic sector reduces to the constant $1$. Choosing the arguments of $\lambda$ and $-\lambda$ in such a way that $\mathrm{arg}(-\lambda)=\mathrm{arg}(\lambda)-\pi$, the induced orientation on the manifold of fixed points gives positive orientation to the North pole and negative orientation to the South pole. From Corollary \ref{thm:localization-formula-antihol} we then find
\[
\int_{S^2}\omega=\int_{S^2}\omega_{\overline{\xi}}=
(i\overline{\xi}_\Lambda)^{-1}
\lambda^{-1/2}(-\lambda)^{-1/2} \int_{\mathrm{Fix}(S^2)}\left(-\frac{4\pi }{1+\rho^2}\right)\lambda \overline{\xi}_\Lambda=4\pi.
\]
\end{example}

\section{{\color{purple}$\C^\ast$-equivariant families} of $\C/\Lambda$-manifolds and modularity}

So far we have been considering a single $\C/\Lambda$-manifold $M$ {\color{purple}together with a $\C/\Lambda$-equivariant bundle $E$}, for a fixed lattice $\Lambda$. Interesting phenomena happen if we let the lattice, the manifold {\color{purple}and the bundle} vary. More precisely, we will be interested into a smooth family $M_\Lambda$ of $\C/\Lambda$-manifolds {\color{purple}equipped with a smooth family of $\C/\Lambda$-equivariant bundles $E_\Lambda$}, with $\Lambda$ ranging over all oriented lattices in $\mathbb{C}$. {\color{purple}And, in particular, we will be interested in the case when these data are $\mathbb{C}^\ast$-equivariant.}  
\begin{remark}\label{rem:smooth-family}
By saying that the famil{\color{purple}ies we are considering are} smooth we are implicitly saying that the set of all oriented lattices in $\C$ has a natural structure of a smooth manifold. It is indeed so: any lattice $\Lambda\subseteq \C$ admits a basis given by an ordered pair $(\omega_1,\omega_2)\in \C^2$ with $\Im(\overline{\omega}_1\omega_2)>0$. Denoting this open subset of $\C^2$ by $\mathcal{U}$, one has that
\[
\mathrm{Lattices}^+(\C)=\mathcal{U}/SL(2;\mathbb{Z}).
\]
Since the $SL(2;\mathbb{Z})$-action on $\mathcal{U}$ is free and properly discontinuous, one sees that $\mathrm{Lattices}^+(\C)$ is naturally a smooth manifold. The intuitive notion of a smooth family of manifolds parametrized by lattices can then be formalized as the datum of a smooth and proper submersion $\mathcal{M}\to \mathrm{Lattices}^+(\C)$. Similarly, one formalizes the notion of a smooth family of $\C/\Lambda$-manifolds by working in the category of smooth group actions over the base manifold $\mathrm{Lattices}^+(\C)$. {\color{purple} Analogous considerations apply to the definition of smooth families of $\C/\Lambda$-equivariant bundles. However, it turns out that as soon as $\C^\ast$-equivariance comes into play the only possible way of having nontrivial smooth families of $\C/\Lambda$-equivariant bundles is by considering infinite rank bundles. This abruptly takes us out of the finite rank constructions of the previous sections and the product formulas defining equivariant Chern classes become infinite products that need regularization in order to be meaningful. We are going to work this out in detail in the example leading to the Witten genus in Section \ref{sec:Witten}. Here, in consideration of the fact that even a detailed presentation of $\C/\Lambda$-families in terms of the base manifold $\mathrm{Lattices}^+(\C)$ will actually not allow us to derive the main results of this Section in terms of the finite-rank technology developed so far, } we will content ourselves in giving ``fibrewise definitions'', leaving to the interested reader task of writing them in terms of global objects over a base. {\color{purple}Although the definitions will be given fibrewise, it is always assumed that whenever an entity depends on a lattice $\Lambda$ the dependence is smooth in $\Lambda$. More crucially, we will pretend to be unaware that the nontrivial bundles occurring in the constructions in the present Section have necessarily infinite rank, and only at the very end of the Section we will reveal the reader where this infinite-dimensionality occurs. So the whole Section is written pretending that the bundles appearing in it are of finite rank, and has to be intended as a presentation of the abstract nonsense motivating why one should expect the modular properties of the Witten genus. These properties will then be rigorously derived in Section \ref{sec:Witten}. We also remark that, since we are ultimately interested only in manifolds with trivial $\C/\Lambda$-actions, one could give the definitions in this Section directly in this setting, simplifying both the exposition and the construction. However, this shortcut would not be very natural from the point of view of the family $M_\Lambda:=\mathrm{Maps}(\C/\Lambda,X)$ leading to the Witten genus, so we prefer to avoid it. The idea is that, while this Section is only fully rigorous for trivial examples, yet it provides the abstract geometric framework where the construction of the Witten genus and a proof of its modular properties take place and so it can help understanding their specific presentation given in Section \ref{sec:Witten}.}
\end{remark}

The multiplicative group $\C^\ast$ of nonzero complex numbers smoothly acts on the set of lattices by homotheties. The multiplication by a nonzero complex number $a$ induces a complex Lie group isomorphism
\[
m_{a,\Lambda}\colon \C/\Lambda\xrightarrow{\sim} \C/a\Lambda
\]
for any $\Lambda$ in $\mathrm{Lattices}^+(\C)$, and these isomorphisms satisfy
\[
m_{1,\Lambda}=\mathrm{id}_{\C/\Lambda}; \qquad 
m_{a_1a_2,\Lambda}=m_{a_1,a_2\Lambda}\circ m_{a_2,\Lambda}=m_{a_2,a_1\Lambda}\circ m_{a_1,\Lambda}
\]
We say that the family $\{M_\Lambda\}$ of $\C/\Lambda$-manifolds is a {\color{purple}$\C^\ast$-equivariant family} if it is compatible with this $\C^\ast$-action. More precisely, we give the following.
\begin{definition}\label{def:conformal-family}
An \emph{{\color{purple}$\C^\ast$-equivariant family}} of $\C/\Lambda$-manifolds is a smooth family $\{M_\Lambda\}$ of $\C/\Lambda$-manifolds such that, for any $a\in \C^\ast$ and any oriented lattice $\Lambda$ one is given diffeomorphisms 
\[
\varphi_{a,\Lambda}\colon M_{\Lambda}\xrightarrow{\sim} M_{a\Lambda}
\]
such that
\[
\varphi_{1,\Lambda}=\mathrm{id}_{M_\Lambda}; \qquad
\varphi_{a_1a_2,\Lambda}=\varphi_{a_1,a_2\Lambda}\circ \varphi_{a_2,\Lambda}=\varphi_{a_2,a_1\Lambda}\circ \varphi_{a_1,\Lambda}
\]
and the diagram 
\begin{equation}\label{eq:compatible-actions}
\begin{tikzcd}
\C/\Lambda\times M_\Lambda \arrow[r]\arrow[d,"{(m_{a,\Lambda},\varphi_{a,\Lambda})}"'] & M_\Lambda\arrow[d,"\varphi_{a,\Lambda}"]\\
\C/a\Lambda\times M_{a\Lambda} \arrow[r] & M_{a\Lambda}
\end{tikzcd}
\end{equation}
commutes.
\end{definition}
\begin{remark}
In the same spirit of Remark \ref{rem:smooth-family}, one can express the notion of a {\color{purple}$\C^\ast$-equivariant family} in terms of smooth fiber bundles over the moduli stack
\[
\mathcal{M}_{1,1}(\C)=\mathrm{Lattices}^+(\C)/\!/\C^\ast=\mathbb{H}/\!/SL(2,\mathbb{Z})
\]
of elliptic curves over $\C$. Notice that neither the $\C^\ast$-action on  $\mathrm{Lattices}^+(\C)$ nor the $SL(2,\mathbb{Z})$ on the upper complex half-plane $\mathbb{H}=\{\tau\in \mathbb{C}\,:\ \Im(\tau)>0\}$ are free due the fact that the multiplication by $-1$ acts trivially. In terms of elliptic curves this corresponds to the standard involution realizing them as ramified double covers of $\mathbb{P}^1\C$. Additionally, there are points with larger stabilizers, corresponding to {\color{purple} lattices with nontrivial symmetries, e.g., the square lattice $\mathbb{Z}[i]\subseteq \C$.}
\end{remark}

{\color{purple}\begin{remark}
Equipping a manifold $X$ with the trivial $\mathbb{C}/\Lambda$-action for any lattice $\Lambda$ produces a $\mathbb{C}^\ast$-equivariant family of equivariant cohomologies of $X$ parametrized by the elliptic curves $\mathbb{C}/\Lambda$. Arguably, taking the antiholomorphic sectors one obtains a holomorphic vector bundle $\mathbb{E}(X)$ over the moduli space of elliptic curves whose holomorphic sections describe elliptic cohomology classes of $X$ with $\mathbb{C}$-coefficients. More precisely, one should have an elliptic Chern character isomorphism $E\ell\ell(X)\otimes \mathbb{C}\xrightarrow{\sim} \Gamma(\mathcal{M}_{1,1};\mathbb{E}(X))$. This statement, motivated by the analogous one established by Daniel Berwick-Evans in the setting of supersymmetric field theories \cite{berwickevans2013witten,berwickevans2019supersymmetric}, will be hopefully investigated in detail elsewhere.
\end{remark}}

\begin{remark}
It is immediate from Definition \ref{def:conformal-family} that the diffeomorphisms $\varphi_{a,\Lambda}$ induce, by restriction to the fixed points loci, diffeomorphisms
\[
\varphi_{a,\Lambda}\colon \mathrm{Fix}(M_\Lambda)\xrightarrow{\sim} \mathrm{Fix}(M_{a\Lambda}).
\]
\end{remark}

Thanks to the compatibilities between the morphisms $(\varphi_{a,\Lambda},m_{a,\Lambda})$ and the elliptic curve actions in a {\color{purple}$\C^\ast$-equivariant family}, one sees that the pullback morphisms $\varphi_{a,\Lambda}^\ast\colon \Omega^\bullet(M_{a\Lambda};\C)\to \Omega^\bullet(M_\Lambda;\C)$ induce, by restriction to invariant forms, pullback morphisms
\[
\varphi_{a,\Lambda}^\ast\colon \Omega^\bullet(M_{a\Lambda};\C)^{\C/a\Lambda}\to \Omega^\bullet(M_\Lambda;\C)^{\C/\Lambda}.
\]
The complex Lie group isomorphism $m_{a,\Lambda}$ induce, by passing to Lie algebras, complex linear isomorphisms of abelian Lie algebras $dm_{a,\Lambda}\colon \mathfrak{t}_\Lambda\to \mathfrak{t}_{a\Lambda}$. Under the isomorphism $\mathrm{Lie}(\C) \xrightarrow{\sim} \mathfrak{t}_\Lambda$ induced by the projection $\C\to \C/\Lambda$, these linear isomorphisms are just multiplications by the complex number $a$. Complexifying and dualizing we obtain the $\C$-linear automorphism of $\mathrm{Lie}(\C)^\vee\otimes \C$ that acts on the distinguished basis $(dz,d\overline{z})$ as $dz\mapsto a\,dz$ and $d\overline{z}\mapsto \overline{a}\,d\overline{z}$. Therefore,in terms of the distinguished basis $(\xi,\overline{\xi})$ of $ (\mathfrak{t}_\Lambda \otimes_\R \C)^\vee[-2]$ consisting of $dz$ and $d\overline{z}$ placed in degree 2, the $\C$-linear isomorphism
\[
\varphi^\ast_{\mathfrak{t};a,\Lambda}\colon \mathfrak{t}_{a\Lambda}^\vee\otimes \C\xrightarrow{\sim} \mathfrak{t}_{\Lambda}^\vee\otimes\C
\]
induced by $m_a$ is given by 
\[
\varphi^\ast_{\mathfrak{t};a,\Lambda}\colon 
\xi\mapsto a\xi;\qquad 
\varphi^\ast_{\mathfrak{t};a,\Lambda}\colon \overline{\xi}\mapsto \overline{a}\overline{\xi}
\]
\begin{remark}\label{rem:how-xi-lambda-changes}
From
\[
\mathrm{vol}(\C/a\Lambda)=a\overline{a}\, \mathrm{vol}(\C/\Lambda),
\]
one sees that
that in terms of the distinguished basis $(\xi_{a\Lambda},\overline{\xi}_{a\Lambda})$ and $(\xi_{\Lambda},\overline{\xi}_{\Lambda})$ the isomorphism $\varphi^\ast_{\mathfrak{t};a,\Lambda}$ reads
\[
\varphi^\ast_{\mathfrak{t};a,\Lambda}\colon \xi_{a\Lambda}\mapsto \overline{a}^{-1}\xi_\Lambda;\qquad 
\varphi^\ast_{\mathfrak{t};a,\Lambda}\colon \overline{\xi}_{a\Lambda}\mapsto {a}^{-1}\overline{\xi}_\Lambda.
\]
\end{remark}
\begin{lemma}
The data of a {\color{purple}$\C^\ast$-equivariant family} of $\C/\Lambda$-manifolds induce isomorphisms of complexified Cartan tricomplexes
\begin{align*}
\varphi_{a,\Lambda}^*\otimes \varphi^\ast_{\mathfrak{t};a,\Lambda} \colon& 
\Omega^\bullet(M_{a\Lambda};\C)^{\C/a\Lambda} \otimes_\C \mathrm{Sym}(({\mathfrak{t}_{a\Lambda}}^{1,0})^\vee[-2]) \otimes_\C \mathrm{Sym}(({\mathfrak{t}_{a\Lambda}}^{0,1})^\vee[-2])\\ &\xrightarrow{\sim} \Omega^\bullet(M_\Lambda;\C)^{\C/\Lambda} \otimes_\C \mathrm{Sym}(({\mathfrak{t}_\Lambda}^{1,0})^\vee[-2]) \otimes_\C \mathrm{Sym}(({\mathfrak{t}_\Lambda}^{0,1})^\vee[-2]).
\end{align*}
\end{lemma}
\begin{proof}
Since the three differentials are trivial on the generators coming from $\mathfrak{t}_{a\Lambda}^\vee$, we only need to check compatibility with differentials on $\C/a\Lambda$-invariant differential forms on $M_{a\Lambda}$. That is, for an element $\omega\in \Omega^\bullet(M_{a\Lambda};\C)^{\C/a\Lambda}$ we have to check the three identities
\begin{align*}
d_{\mathrm{dR}}(\varphi_{a,\Lambda}^*\omega)&=\varphi_{a,\Lambda}^*(d_{\mathrm{dR}}\omega);\\
\xi \iota_{v^{\Lambda}_{\partial/\partial z}}(\varphi_{a,\Lambda}^*\omega)&=
\varphi^\ast_{\mathfrak{t};a,\Lambda}(\xi)\varphi_{a,\Lambda}^*(\iota_{v^{a\Lambda}_{\partial/\partial z}}\omega)\\
\overline{\xi} \iota_{v^{\Lambda}_{\partial/\partial \overline{z}}}(\varphi_{a,\Lambda}^*\omega)&=
\varphi^\ast_{\mathfrak{t};a,\Lambda}(\overline{\xi})\varphi_{a,\Lambda}^*(\iota_{v^{a\Lambda}_{\partial/\partial \overline{z}}}\omega),
\end{align*}
where $v^{\Lambda}_{\partial/\partial z}$ and $v^{a\Lambda}_{\partial/\partial z}$ are the complex vector fields on $M_\Lambda$ and $M_{a\Lambda}$ corresponding to $\partial/\partial z$ via the differentials of the actions of $\C/\Lambda$ and $C/a\Lambda$, respectively, and similarly for  $v^{\Lambda}_{\partial/\partial \overline{z}}$ and $v^{a\Lambda}_{\partial/\partial \overline{z}}$.
The first identity is obvious. Using $\varphi^\ast_{\mathfrak{t};a,\Lambda}(\xi)=a\xi$, the second identity reduces to
$\iota_{v^{\Lambda}_{\partial/\partial z}}(\varphi_{a,\Lambda}^*\omega)=
a\varphi_{a,\Lambda}^*(\iota_{v^{a\Lambda}_{\partial/\partial z}}\omega)$. By definition of the pullback of differential forms, $\iota_{v^{\Lambda}_{\partial/\partial z}}(\varphi_{a,\Lambda}^*\omega)=\varphi_{a,\Lambda}^*(\iota_{d\varphi_{a,\Lambda}(v^{\Lambda}_{\partial/\partial z})}\omega)$, and so we are reduced to proving the identity $
d\varphi_{a,\Lambda}(v^{\Lambda}_{\partial/\partial z})=v^{a\Lambda}_{a\partial/\partial z}$. Since $a\partial/\partial z=dm_{a,\Lambda}\partial/\partial z$, the identity we have to prove is equivalent to the commutativity of the diagram
\[
\begin{tikzcd}
\mathfrak{t}_\Lambda \arrow[r, "v^\Lambda"]\arrow[d,"dm_{a,\Lambda}"'] & \mathrm{VectorFields}(M_\Lambda)\arrow[d,"d\varphi_{a,\Lambda}"]\\
\mathfrak{t}_{a\Lambda} \arrow[r,"v^{a\Lambda}"] & \mathrm{VectorFields}(M_{a\Lambda}),
\end{tikzcd}
\]
which is immediate from \eqref{eq:compatible-actions}. The proof of the third identity is identical.
\end{proof}
\begin{corollary}
The data of a {\color{purple}$\C^\ast$-equivariant family} of $\C/\Lambda$-manifolds induce isomorphisms between the antiholomorphic sector of the complexified Cartan complex of $M_{a\Lambda}$ and that of $M_\Lambda$, for any oriented lattice $\Lambda$ and every $a\in \C^\ast$.
\end{corollary}

\begin{corollary}\label{cor:vector-bundle}
In a {\color{purple}$\C^\ast$-equivariant family}, the $\C/\Lambda$-equivariant cohomology of $M_\Lambda$ and the  $\C/a\Lambda$-equivariant cohomology of $M_{a\Lambda}$ are canonically isomorphic. The same holds for their (anti-)holomorphic sectors. 
\end{corollary}

\begin{remark}
In global terms, Corollary \ref{cor:vector-bundle} amounts to saying that $H^\bullet_{\C/\Lambda}(M_\Lambda;\C)$ and $H^\bullet_{\C/\Lambda;\overline{\partial}}(M_\Lambda;\C)$ define complex vector bundles over the moduli stack $
\mathcal{M}_{1,1}(\C)$.
\end{remark}

\subsection{Equivariant vector bundles over {\color{purple}$\C^\ast$-equivariant families}}
Given a family $\{M_\Lambda\}$ of $\C/\Lambda$-manifolds, we can consider a family of $\C/\Lambda$-equivariant vector bundles $E_\Lambda$ over the fixed loci $\mathrm{Fix}(M_\Lambda)$. Again, regularity of the family $\{E_\Lambda\}$ can be expressed in terms of a single equivariant vector bundle $\mathcal{E}$ over $\mathrm{Fix}(\mathcal{M})$, where $\mathcal{M}\to \mathrm{Lattices}^+(\C)$ is the smooth and proper submersion from Remark \ref{rem:smooth-family}, but here too we will content us with fibrewise definitions. When $\{M_\Lambda\}$ is a {\color{purple}$\C^\ast$-equivariant family}, it is natural to consider vector bundles $E_\Lambda$ that form a {\color{purple}$\C^\ast$-equivariant family}, too. This leads to the following.

\begin{definition}
A {\color{purple}$\C^\ast$-equivariant family} of $\C/\Lambda$-equivariant vector bundles $E_\Lambda$ over the fixed loci of a {\color{purple}$\C^\ast$-equivariant family} of $\C/\Lambda$-manifolds is a smooth family of equivariant vector bundles equipped with isomorphisms of vector bundles
\[
\psi_{a,\Lambda}\colon E_\Lambda\xrightarrow{\sim} \varphi_{a,\Lambda}^\ast E_{a\Lambda}
\]
making the diagrams 
\[
\begin{tikzcd}
\C/\Lambda\times E_\Lambda \arrow[r]\arrow[d,"{(m_{a,\Lambda},\psi_{a,\Lambda})}"'] & E_\Lambda\arrow[d,"\psi_{a,\Lambda}"]\\
\C/a\Lambda\times \varphi_{a,\Lambda}^\ast E_{a\Lambda} \arrow[r] & \varphi_{a,\Lambda}^\ast E_{a\Lambda}
\end{tikzcd}
\]
commute
for any oriented lattice $\Lambda$ and any $a\in \C^\ast$, such that
\begin{equation}\label{eq:chain-rule}
\psi_{a_1a_2,\Lambda}=\varphi_{a_1,\Lambda}^\ast(\psi_{a_2,a_1\Lambda})\circ \psi_{a_1,\Lambda}
\end{equation}
for any $a_1,a_2$.
\end{definition}
\begin{example}\label{ex:normal-is-conformal}
The restrictions to the fixed loci of the tangent bundles $TM_\Lambda$ for a {\color{purple}$\C^\ast$-equivariant family} $\{M_\Lambda\}$ are a {\color{purple}$\C^\ast$-equivariant family} of $\C/\Lambda$-equivariant vector bundles, with isomorphisms $\psi_{a,\Lambda}$ given by the differentials of the diffeomorphisms $\varphi_{a,\Lambda}$:
\[
d\varphi_{a,\Lambda}\colon TM_\Lambda\xrightarrow{\sim} \varphi_{a,\Lambda}^\ast TM_{a\Lambda}.
\]
Equation \eqref{eq:chain-rule} in this case is
\[
d\varphi_{a_1a_2,\Lambda}=\varphi_{a_1,\Lambda}^\ast(d\varphi_{a_2,a_1\Lambda})\circ d\varphi_{a_1,\Lambda}
\]
and so it is satisfied due to the chain rule for differentials, since $\varphi_{a_1a_2,\Lambda}=\varphi_{a_2,a_1\Lambda}\circ \varphi_{a_1,\Lambda}$. The tangent bundles $T\mathrm{Fix}(M_\Lambda)$ to the fixed loci form a {\color{purple}$\C^\ast$-equivariant family} of subbundles of $\{TM_\Lambda\bigr\vert_{\mathrm{Fix}(M_\Lambda)}\}$, and so the normal bundles to the fixed loci
\[
\nu_\Lambda=\frac{TM_\Lambda\bigr\vert_{\mathrm{Fix}(M_\Lambda)}}{T\mathrm{Fix}(M_\Lambda)}
\]
are a {\color{purple}$\C^\ast$-equivariant family}.
\end{example}
\begin{lemma}\label{lem:pullback-conformal}
Let $\{(L_\Lambda,\chi_\Lambda)\}$ be a {\color{purple}$\C^\ast$-equivariant family} of $\C/\Lambda$-equivariant complex line bundles on $\mathrm{Fix}(M_\Lambda)$. Then
\[
(\varphi_{a,\Lambda}^\ast\otimes\varphi_{\mathfrak{t}a,\Lambda}^\ast)
(c_{1,\C/a\Lambda}(L_{a\Lambda},\chi_{a\Lambda}))=
c_{1,\C/\Lambda}(L_{\Lambda},\chi_{\Lambda})
\]
and
\[
(\varphi_{a,\Lambda}^\ast\otimes\varphi_{\mathfrak{t}a,\Lambda}^\ast)
(c^{\overline{\partial}}_{1,\C/a\Lambda}(L_{a\Lambda},\chi_{a\Lambda}))=
c^{\overline{\partial}}_{1,\C/\Lambda}(L_{\Lambda},\chi_{\Lambda}).
\]
\end{lemma}
\begin{proof}
In the notation of Remark \ref{rem:notation-rho}, the character $\chi_\Lambda$ will be of the form $\rho_\lambda$ for some $\lambda\in \Lambda$. By definition of {\color{purple}$\C^\ast$-equivariant family}, the diagram
\[
\begin{tikzcd}
\C/\Lambda\times L_\Lambda \arrow[r,"\chi_\Lambda\cdot"]\arrow[d,"{(m_{a,\Lambda},\psi_{a,\Lambda})}"'] & L_\Lambda\arrow[d,"\psi_{a,\Lambda}"]\\
\C/a\Lambda\times \varphi_{a\Lambda}^\ast L_{a\Lambda} \arrow[r,"\chi_{a\Lambda}\cdot"] & \varphi_{a\Lambda}^\ast L_{a\Lambda}
\end{tikzcd}
\]
commutes, and so $\chi_\Lambda(z)= \chi_{a\Lambda}(az)$ for any $z\in \C$. This means that $\chi_{a\Lambda}=\rho_{a\lambda}$. Now we compute
\[
c_{1,\C/a\Lambda}(L_{a\Lambda},\chi_{a\Lambda})=
c_{1,\C/a\Lambda}(L_{a\Lambda},\rho_{a\lambda}) = c_1(L_{a\Lambda}) + a\lambda\overline{\xi}_{a\Lambda}-\overline{a\lambda}\xi_{a\Lambda},
\]
and so by Remark \ref{rem:how-xi-lambda-changes} we have
\begin{align*}
    (\varphi_{a,\Lambda}^\ast\otimes\varphi_{\mathfrak{t}a,\Lambda}^\ast)
(c_{1,\C/a\Lambda}(L_{a\Lambda},\chi_{a\Lambda}))&=
(\varphi_{a,\Lambda}^\ast\otimes\varphi_{\mathfrak{t}a,\Lambda}^\ast)
(c_1(L_{a\Lambda}) + a\lambda\overline{\xi}_{a\Lambda}-\overline{a\lambda}\xi_{a\Lambda})\\
&=
c_1(\varphi_{a,\Lambda}^\ast L_{a\Lambda})+\lambda\overline{\xi}_{\Lambda}-\overline{\lambda}\xi_{\Lambda}\\
&=c_1(L_{\Lambda})+\lambda\overline{\xi}_{\Lambda}-\overline{\lambda}\xi_{\Lambda}\\
&=c_{1,\C/\Lambda}(L_{\Lambda},\chi_{\Lambda})).
\end{align*}
The proof for the antiholomorphic part is analogous. 
\end{proof}
By the splitting principle we therefore get the following.
\begin{proposition}\label{prop:invariance-top-chern}
Let $\{E_\Lambda\}$ be a {\color{purple}$\C^\ast$-equivariant family} of $\C/\Lambda$-equivariant complex vector bundles over $\mathrm{Fix}(M_\Lambda)$. Then we have
\[
(\varphi_{a,\Lambda}^\ast\otimes\varphi_{\mathfrak{t};a,\Lambda}^\ast)
(c^{\overline{\partial}}_{\mathrm{top},\C/a\Lambda}(E_{a\Lambda}^\mathrm{eff}))=
c^{\overline{\partial}}_{\mathrm{top},\C/\Lambda}(E_\Lambda^\mathrm{eff})
\]
and
\[
(\varphi_{a,\Lambda}^\ast\otimes\varphi_{\mathfrak{t};a,\Lambda}^\ast)
(\widehat{c^{\overline{\partial}}_{\mathrm{top},\C/a\Lambda}}(E_{a\Lambda}^\mathrm{eff}))=
\widehat{c^{\overline{\partial}}_{\mathrm{top},\C/\Lambda}}(E_\Lambda^\mathrm{eff}).
\]
\end{proposition}
\begin{proof}
The first identity is immediate from Lemma \ref{lem:pullback-conformal} and the splitting principle. To prove the second identity, we write
\[
c^{\overline{\partial}}_{\mathrm{top},\C/a\Lambda}(E_{a\Lambda}^\mathrm{eff}) 
={\overline{\xi}_{a\Lambda}}^{\rk E_{a\Lambda}^\mathrm{eff}}\left(\prod_{a\lambda\in a\Lambda\setminus\{0\}}(a\lambda)^{\mathrm{rk} E_{a\Lambda;\rho_{a\lambda}}} \right) \widehat{c_{\mathrm{top},\C/\Lambda}^{\overline{\partial}}}(E_{a\Lambda}^\mathrm{eff}),
\]
notice that $\mathrm{rk} E_{a\Lambda;\rho_{a\lambda}}=\mathrm{rk} E_{\Lambda;\rho_{\lambda}}$for every $\lambda\in \Lambda$, and use the first identity to get
\begin{align*}
    {\overline{\xi}_\Lambda}^{\rk E_\Lambda^\mathrm{eff}}&\left(\prod_{\lambda\in \Lambda\setminus\{0\}}\lambda ^{\mathrm{rk} E_{\Lambda;\rho_\lambda}} \right) \widehat{c_{\mathrm{top},\C/\Lambda}^{\overline{\partial}}}(E_\Lambda^\mathrm{eff})=c^{\overline{\partial}}_{\mathrm{top},\C/\Lambda}(E_\Lambda^\mathrm{eff})\\
    &=(\varphi_{a,\Lambda}^\ast\otimes\varphi_{\mathfrak{t};a,\Lambda}^\ast)
({c^{\overline{\partial}}_{\mathrm{top},\C/a\Lambda}(E_{a\Lambda}^\mathrm{eff})})\\
&=(\varphi_{\mathfrak{t};a,\Lambda}^\ast{\overline{\xi}_{a\Lambda}})^{\rk E_{\Lambda}^\mathrm{eff}}\left(\prod_{\lambda\in \Lambda\setminus\{0\}}(a\lambda)^{\mathrm{rk} E_{\Lambda;\rho_{\lambda}}} \right)(\varphi_{a,\Lambda}^\ast\otimes\varphi_{\mathfrak{t};a,\Lambda}^\ast) \widehat{c_{\mathrm{top},\C/a\Lambda}^{\overline{\partial}}}(E_{a\Lambda}^\mathrm{eff})\\
&=\overline{\xi}_{\Lambda}^{\rk E_{\Lambda}^\mathrm{eff}}\left(\prod_{\lambda\in \Lambda\setminus\{0\}}\lambda^{\mathrm{rk} E_{\Lambda;\rho_{\lambda}}} \right)(\varphi_{a,\Lambda}^\ast\otimes\varphi_{\mathfrak{t};a,\Lambda}^\ast) \widehat{c_{\mathrm{top},\C/a\Lambda}^{\overline{\partial}}}(E_{a\Lambda}^\mathrm{eff}),
\end{align*}
and the conclusion follows.
\end{proof}
\begin{corollary}\label{cor:identical}
Let $\{V_\Lambda\}$ be a {\color{purple}$\C^\ast$-equivariant family} of $\C/\Lambda$-equivariant real vector bundles on $\mathrm{Fix}(M_\Lambda)$. Then we have
\[
(\varphi_{a,\Lambda}^\ast\otimes\varphi_{\mathfrak{t};a,\Lambda}^\ast)(\widehat{\mathrm{eul}^{\overline{\partial}}_{\C/a\Lambda}}(V_{a\Lambda}^\mathrm{eff}))=\widehat{\mathrm{eul}^{\overline{\partial}}_{\C/\Lambda}}(V_\Lambda^\mathrm{eff})
\]
 \end{corollary}
Specializing this to the case considered in Example \ref{ex:normal-is-conformal} we find the following.
\begin{corollary}
Let $\{M_\Lambda\}$ be a {\color{purple}$\C^\ast$-equivariant family} of $\C/\Lambda$-manifolds, and let $\nu_\Lambda$ be the normal bundle for $\mathrm{Fix}(M_\Lambda)\hookrightarrow M_\Lambda$. Then we have
\[
(\varphi_{a,\Lambda}^\ast\otimes\varphi_{\mathfrak{t};a,\Lambda}^\ast)(\widehat{\mathrm{eul}^{\overline{\partial}}_{\C/a\Lambda}}(\nu_{a\Lambda}))=\widehat{\mathrm{eul}^{\overline{\partial}}_{\C/\Lambda}}(\nu_\Lambda)
\]
for any oriented lattice $\Lambda$ and any $a\in \C^\ast$.
\end{corollary}
\subsection{Trivializations of the fixed points bundle and modular forms}\label{sec:fixed-modular}
Assume now we have a trivialization of the family of fixed points submanifolds of a {\color{purple}$\C^\ast$-equivariant family} $\{M_\Lambda\}$. In terms of smooth bundles over the moduli stack $\mathcal{M}_{1,1}$, this is a trivialization of the smooth fiber bundle $\mathrm{Fix}(\mathcal M)\to \mathcal{M}_{1,1}$. In terms of fibrewise definitions, this is the following.
\begin{definition}\label{def:trivialization}
Let $\{M_\Lambda\}$ be a {\color{purple}$\C^\ast$-equivariant family}. A trivialization of the {\color{purple}$\C^\ast$-equivariant family} $\{\mathrm{Fix}(M_\Lambda)\}$ is the datum of a smooth manifold $X$ and of a collection of diffeomorphisms 
\[
j_\Lambda\colon X\xrightarrow{\sim} \mathrm{Fix}(M_\Lambda)
\]
such that the diagrams
\begin{equation}\label{eq:commute-trivial}
\begin{tikzcd}
 & \mathrm{Fix}(M_\Lambda)\arrow[dd,"\varphi_{a,\Lambda}"]\\
X  \arrow[ru,"j_{\Lambda}"]\arrow[rd,"j_{a\Lambda}"']&\\
& \mathrm{Fix}(M_{a\Lambda})
\end{tikzcd}
\end{equation}
commute for any oriented lattice $\Lambda$ and any $a\in \C^\ast$.
\end{definition}
A trivialization of the fixed points bundle produces a trivialization of the $\C/\Lambda$-equivariant cohomologies of the fixed loci and of their antiholomorphic sectors. More precisely, we have the following lemma, whose proof is immediate from Remark \ref{rem:how-xi-lambda-changes} and Definition \ref{def:trivialization}. 
\begin{lemma}\label{lem:trivialization}
Let $\xi_X$ and $\overline{\xi}_X$ be two variables in degree 2, and let $j_{\mathfrak{t};\Lambda}^*\colon \C[\xi_\Lambda,\overline{\xi}_\Lambda]\xrightarrow{\sim} \C[\xi_X,\overline{\xi}_X]$ be the ring isomorphism induced by 
\[
j_{\mathfrak{t};\Lambda}^*\colon \xi_\Lambda\mapsto \xi_X; \quad j_{\mathfrak{t};\Lambda}^*\colon \overline{\xi}_\Lambda\mapsto \overline{\xi}_X.
\]
For any $a\in \C^\ast$, let $\mu_a\colon \C[\xi_X,\overline{\xi}_X]\xrightarrow{\sim} \C[\xi_X,\overline{\xi}_X]$ be the ring isomorphism induced by 
\[
\mu_a\colon \xi_X\mapsto \overline{a}^{-1}\xi_X; \quad 
\mu_a\colon \overline{\xi}_X\mapsto a^{-1}\overline{\xi}_X.
\]
Then 
\[
H^\bullet_{\C/\Lambda}(\mathrm{Fix}(M_\Lambda),\C)\cong 
H^\bullet(\mathrm{Fix}(M_\Lambda),\C)[\xi_\Lambda,\overline{\xi}_\Lambda]\xrightarrow{j_\Lambda^\ast\otimes j_{\mathfrak{t};\Lambda}^*}
H^\bullet(X,\C)[\xi_X,\overline{\xi}_X]
\]
is an isomorphism of graded rings, and all the diagrams
\[
\begin{tikzcd}
  H^\bullet_{\C/a\Lambda}(\mathrm{Fix}(M_{a\Lambda}),\C)\arrow[d,"\varphi_{a,\Lambda}^\ast\otimes \varphi_{\mathfrak{t};a,\Lambda}^\ast"'] \arrow[rr,"j_{a\Lambda}^\ast\otimes j_{\mathfrak{t};a\Lambda}^*"]&& H^\bullet(X,\C)[\xi_X,\overline{\xi}_X]\arrow[d,"\mathrm{id}\otimes \mu_a"]
  \\
 H^\bullet_{\C/\Lambda}(\mathrm{Fix}(M_\Lambda),\C)\arrow[rr,"j_\Lambda^\ast\otimes j_{\mathfrak{t};\Lambda}^*"']&& H^\bullet(X,\C)[\xi_X,\overline{\xi}_X]
\end{tikzcd}
\]
commute. The same hold for the antiholomorphic sectors: 
\[
H^\bullet_{\C/\Lambda;\overline{\partial}}(\mathrm{Fix}(M_\Lambda),\C)\cong 
H^\bullet(\mathrm{Fix}(M_\Lambda),\C)[\overline{\xi}_\Lambda]\xrightarrow{j_\Lambda^\ast\otimes j_{\mathfrak{t};\Lambda}^*}
H^\bullet(X,\C)[\overline{\xi}_X]
\]
is an isomorphism of graded rings, and all the diagrams
\[
\begin{tikzcd}
  H^\bullet_{\C/a\Lambda;\overline{\partial}}(\mathrm{Fix}(M_{a\Lambda}),\C)\arrow[d,"\varphi_{a,\Lambda}^\ast\otimes \varphi_{\mathfrak{t};a,\Lambda}^\ast"'] \arrow[rr,"j_{a\Lambda}^\ast\otimes j_{\mathfrak{t};a\Lambda}^*"]&& H^\bullet(X,\C)[\overline{\xi}_X]\arrow[d,"\mathrm{id}\otimes \mu_a"]
  \\
 H^\bullet_{\C/\Lambda;\overline{\partial}}(\mathrm{Fix}(M_\Lambda),\C)\arrow[rr,"j_\Lambda^\ast\otimes j_{\mathfrak{t};\Lambda}^*"']&& H^\bullet(X,\C)[\overline{\xi}_X]
\end{tikzcd}
\]
commute. 
\end{lemma}

The second statement in Lemma \ref{lem:trivialization} clearly continues to hold after we localize $H^\bullet(\mathrm{Fix}(M_\Lambda),\C)[\overline{\xi}_\Lambda]$ at $\overline{\xi}_\Lambda$ and $H^\bullet(X,\C)[\overline{\xi}_X]$ at $\overline{\xi}_X$.
Recalling Remark \ref{rem:even-powers}, we can now give the main definition of this section.
\begin{definition}\label{def:witten-classes}
Let $\{M_\Lambda\}$ be a {\color{purple}$\C^\ast$-equivariant family} of $\C/\Lambda$-manifolds with a given trivialization $(X,\{j_\Lambda\})$ of its fixed points, and let $\mathcal{E}=\{E_\Lambda\}$ and $\mathcal{V}=\{V_\Lambda\}$ be {\color{purple}$\C^\ast$-equivariant families} of $\C/\Lambda$-equivariant complex and real vector bundles over $\mathrm{Fix}(M_\Lambda)$, respectively. The \emph{total complex {\color{purple}$\mathcal{W}$-}class}
of $\mathcal{E}$ is the function
\[
{\color{purple}\mathcal{W}}_\mathcal{E}\colon\mathrm{Lattices}^+(\C)\to H^\bullet(X,\C)[\overline{\xi}_X^{-1}]
\]
given by
\[
{\color{purple}\mathcal{W}}_\mathcal{E}(\Lambda):=(j_\Lambda^\ast\otimes j_{\mathfrak{t};\Lambda}^*)\left(\frac{1}{\widehat{c^{\overline{\partial}}_{\mathrm{top},\C/\Lambda}}(E_\Lambda^\mathrm{eff})}\right).
\]
The coefficient
\[
{\color{purple}\mathcal{W}}_{\mathcal{E};k}\colon \mathrm{Lattices}^+(\C)\to H^{2k}(X,\C)
\]
defined by the expansion 
\[
{\color{purple}\mathcal{W}}_\mathcal{E}(\Lambda)=\sum_{k=0}^\infty {\color{purple}\mathcal{W}}_{\mathcal{E};k}(\Lambda)\overline{\xi}_X^{-k}
\]
will be called the $k$-th complex {\color{purple}$\mathcal{W}$-}class of $\mathcal{E}$.
The \emph{total real {\color{purple}$\mathcal{W}$-}class} of $\mathcal{V}$ is the function
\[
{\color{purple}\mathcal{W}}_{\mathbb{R};\mathcal{V}}\colon\mathrm{Lattices}^+(\C)\to H^\bullet(X,\C)[\overline{\xi}_X^{-1}]
\]
given by
\[
{\color{purple}\mathcal{W}}_{\mathbb{R};\mathcal{V}}(\Lambda):=(j_\Lambda^\ast\otimes j_{\mathfrak{t};\Lambda}^*)\left(\frac{1}{\widehat{\mathrm{eul}^{\overline{\partial}}_{\C/\Lambda}}(V_\Lambda^\mathrm{eff})}\right).
\]
The coefficient
\[
{\color{purple}\mathcal{W}}_{\mathbb{R};\mathcal{V};k}\colon \mathrm{Lattices}^+(\C)\to H^{4k}(X,\C)
\]
defined by the expansion 
\[
{\color{purple}\mathcal{W}}_{\mathbb{R};\mathcal{V};k}(\Lambda)=\sum_{k=0}^\infty {\color{purple}\mathcal{W}}_{\mathbb{R};\mathcal{V};k}(\Lambda)\overline{\xi}_X^{-2k}
\]
will be called the $k$-th real {\color{purple}$\mathcal{W}$-}class of $\mathcal{V}$.
\end{definition}
\begin{proposition}\label{prop:modularity}
The $k$-th complex {\color{purple}$\mathcal{W}$-}class ${\color{purple}\mathcal{W}}_{\mathcal{E};k}$ is a modular form of weight $k$ {\color{purple}with values in $H^{2k}(X,\C)$}. The $k$-th real {\color{purple}$\mathcal{W}$-}class ${\color{purple}\mathcal{W}}_{\mathbb{R};\mathcal{V};k}$ is a modular form of weight $2k$ with values in {\color{purple}$H^{4k}(X,\C)$}.
\end{proposition}
\begin{proof}
We give a proof for the complex {\color{purple}$\mathcal{W}$-}classes first.
We have to show that for any $a\in \C^\ast$ and any oriented lattice $\Lambda$ we have 
\[
{\color{purple}\mathcal{W}}_{\mathcal{E};k}(a\Lambda)=a^{-k}{\color{purple}\mathcal{W}}_{\mathcal{E},k}(\Lambda)
\]
and that, denoting by $\Lambda_\tau$ the lattice $\Lambda_\tau:=\mathbb{Z}\oplus\tau\mathbb{Z}$ for a complex number $\tau$ in the upper complex half-plane $\mathbb{H}$ one has that
\begin{align*}
 \mathbb{H}&\to H^{2k}(X,\C)[\overline{\xi}_X^{-1}] \\
 \tau&\mapsto {\color{purple}\mathcal{W}}_{\mathcal{E};k}(\Lambda_\tau)
\end{align*}
is a holomorphic function of $\tau$. The first identity follows from Lemma \ref{lem:trivialization} and Proposition \ref{prop:invariance-top-chern}. Indeed, we have
\begin{align*}
    {\color{purple}\mathcal{W}}_\mathcal{E}(a\Lambda)&=(j_{a\Lambda}^\ast\otimes j_{\mathfrak{t};a\Lambda}^*)\left(\frac{1}{\widehat{c^{\overline{\partial}}_{\mathrm{top},\C/a\Lambda}}(E_{a\Lambda}^\mathrm{eff})}\right)\\
    &=(\mathrm{id}\otimes \mu_{a^{-1}})\circ (j_\Lambda^\ast\otimes j^\ast_{\mathfrak{t};\Lambda})\circ(\varphi_{a\Lambda}^\ast\otimes \varphi_{\mathfrak{t};a\Lambda}^*)\left(\frac{1}{\widehat{c^{\overline{\partial}}_{\mathrm{top},\C/a\Lambda}}(E_{a\Lambda}^\mathrm{eff})}\right)\\
    &=(\mathrm{id}\otimes \mu_{a^{-1}})\circ (j_\Lambda^\ast\otimes j^\ast_{\mathfrak{t};\Lambda})\left(\frac{1}{\widehat{c^{\overline{\partial}}_{\mathrm{top},\C/\Lambda}}(E_{\Lambda}^\mathrm{eff})}\right)\\
    &=(\mathrm{id}\otimes \mu_{a^{-1}}){\color{purple}\mathcal{W}}_\mathcal{E}(\Lambda).
\end{align*}
Since $\mu_{a^{-1}}(\overline{\xi}_X^{-1})=a^{-1}\overline{\xi}_X^{-1}$, expanding this identity gives
\[
\sum_{k=0}^\infty {\color{purple}\mathcal{W}}_{\mathcal{E};k}(a\Lambda)\overline{\xi}_X^{-k}=
\sum_{k=0}^\infty a^{-k}{\color{purple}\mathcal{W}}_{\mathcal{E};k}(\Lambda)\overline{\xi}_X^{-k}.
\]
Holomorphicity of $\tau\mapsto {\color{purple}\mathcal{W}}_{\mathcal{E};k}(\Lambda_\tau)$ is immediate from Remark \ref{rem:why-antiholo}. {\color{purple}This shows that $\mathcal{W}_{\mathcal{E};k}$ is a weak modular form of weight $k$. The analysis of the behavior of $\mathcal{W}_{\mathcal{E};k}$ for $\tau\to +i\infty$, showing that $\mathcal{W}_{\mathcal{E};k}$ is indeed a modular form, immediately follows from the discussion at the end of this Section.} The proof for the real {\color{purple}$\mathcal{W}$} classes is identical, by using Corollary \ref{cor:identical}.
\end{proof}

\begin{corollary}\label{cor:modularity}
In the same assumptions as in Definition \ref{def:witten-classes}, if  $X$ is a manifold of even dimension $d$, then
\[
\int_X {\color{purple}\mathcal{W}}_{\mathcal{E};d/2}
\]
is a complex valued modular form of weight $d/2$. If  $X$ is a manifold of dimension $d$ with $d\equiv 0\mod 4$, then
\[
\int_X {\color{purple}\mathcal{W}}_{\mathbb{R};\mathcal{V};d/4}
\]
is a complex valued modular form of weight $d/2$.
\end{corollary}
{\color{purple}
As in every respectable detective story it is now time to unveil the mystery around the somehow suspicious Proposition \ref{prop:modularity} and the role infinite rank bundles secretly play in it: after all, how is it possible that the finite product defining the total complex {\color{purple}$\mathcal{W}$-}class of $\mathcal{E}$ defines a modular form? The point is that Proposition \ref{prop:modularity} is true, but it is trivially true: a $\mathbb{C}^*$-equivariant family of $\C/\Lambda$-equivariant finite rank bundles only carries the trivial action by $\C/\Lambda$, so that $E^\mathrm{eff}_\Lambda=\mathbf{0}$ and the normalized top Chern class of $E^\mathrm{eff}_\Lambda$ is $1$. The $k$-th complex {\color{purple}$\mathcal{W}$} class of $\mathcal{E}$ is then $1$ for $k=0$ and $0$ for $k>0$. These are clearly modular forms, but trivially so. The reason for the triviality of the $\C/\Lambda$-action on $E_\Lambda$ is hidden in Lemma \ref{lem:pullback-conformal}. There, we consider a $\C^\ast$-equivariant family $\{(L_\Lambda,\chi_\Lambda)\}$ of $\C/\Lambda$-equivariant complex line bundles on $\mathrm{Fix}(M_\Lambda)$. By looking at the second component of the pair $(L_\Lambda,\chi_\Lambda)$ we have in particular an association $\Lambda\mapsto \chi_\Lambda$. Identifying the characters of $\C/\Lambda$ with the lattice $\Lambda$ itself, we therefore see we are picking an element of $\Lambda$ or every $\Lambda$. And the only way of doing this in a continuous and equivariant manner is to pick the element $\mathbf{0}$ for any $\Lambda$. Indeed, a continuous $\C^\ast$-equivariant choice of $\chi_\Lambda$ is equivalent to the datum of a continuous and $SL(2;\mathbb{Z})$-equivariant function $\mathbb{H}\to \mathbb{Z}^2$, where one has the standard action of $SL(2;\mathbb{Z})$ on $\mathbb{Z}^2$. Since $\mathbb{H}$ is connected, this is the datum of an $SL(2;\mathbb{Z})$-invariant element of $\mathbb{Z}^2$ and the only such an element is $(0,0)$. If instead of a family of line bundles we consider a family of finite-rank vector bundles not much changes: now for each lattice $\Lambda$ we are picking the set of characters of $\C/\Lambda$ corresponding to the formal decomposition of $E_\Lambda$ into a direct sum of $\C/\Lambda$-equivariant line bundles, and doing this in a continuous way is equivalent to picking a finite $SL(2;\mathbb{Z})$-invariant subset of $\mathbb{Z}^2$. But the only such subset is $\{(0,0)\}$: an invariant subset is an union of orbits, and all the $SL(2;\mathbb{Z})$-orbits in $\mathbb{Z}^2$, except $\{(0,0)\}$ are infinite. Things drastically change if we allow infinite-rank vector bundles. Now we can have nontrivial $SL(2;\mathbb{Z})$-invariant subsets of $\mathbb{Z}^2$ and so the construction in Proposition \ref{prop:modularity} will formally produce a modular form. Yet, since the normalized top Chern class of $E^\mathrm{eff}_\Lambda$ will now be an infinite product with no reason to converge, this modular form will only be a formal expression. What one can do is to try to use regularization techniques to change the possibly divergent infinite product into a convergent one and hope that the modular properties of the formal expression survive the regularization process. In the next Section we show that this is indeed so in the particular case of this general construction leading to the Witten genus of a rational string manifold.
}

\section{The Witten class of double loop spaces}\label{sec:Witten}

Let now $X$ be a smooth $d$-dimensional manifold.
A paradigmatic example of a {\color{purple}$\C^\ast$-equivariant family} of $\C/\Lambda$-manifolds is given
by
\[
M_\Lambda:=\mathrm{Maps}(\C/\Lambda,X),
\]
the spaces of smooth maps from $\C/\Lambda$ to $X$, with their standard Fréchet infinite-dimensional smooth manifold structures, and with $\C/\Lambda$-actions given by translation: $z\star \gamma\colon p\mapsto \gamma(p+z)$. The isomorphisms
\[
\varphi_{a,\Lambda}\colon \mathrm{Maps}(\C/\Lambda,X)\xrightarrow{\sim} \mathrm{Maps}(\C/a\Lambda,X)
\]
are given by the pullbacks along $m_{a^{-1},a\Lambda}\colon \C/a\Lambda\to \C/\Lambda$. The commutativity of \eqref{eq:compatible-actions} is then the trivial identity $a^{-1}(p+az)=a^{-1}p+z$.
The submanifold of fixed points for this action consists of the submanifold of constant loops, so that we have a canonical trivialization $j_\Lambda\colon X\xrightarrow{\sim} \mathrm{Fix}(M_\Lambda)$ mapping a point $x\in X$ to the constant map $\gamma_x\colon \C/\Lambda \to X$ with constant value $x$. The commutativity of \eqref{eq:commute-trivial} is trivial. We are thus in the situation considered in Section \ref{sec:fixed-modular} and so, by 
Proposition \ref{prop:modularity} and Corollary \ref{cor:modularity}, modular forms are naturally associated with the {\color{purple}$\C^\ast$-equivariant family} $\{\mathrm{Maps}(\C/\Lambda,X)\}$ and so, ultimately, to $X$.
\par
Things are however not so straightforward. Indeed, as the normal bundle to $X$ in $\mathrm{Maps}(\C/\Lambda,X)$ has infinite rank, we will need to make sense of the now infinite products defining normalized  equivariant top Chern classes and Euler classes. The idea here is to write
\[
\left(1+\frac{\alpha_{i}(E_{\rho_{\lambda}})\overline{\xi}_\Lambda^{-1}}{\lambda}\right)=\left(1+\frac{z}{\lambda}\right)\biggr\vert_{z=\alpha_{i}(E_{\rho_{\lambda}})\overline{\xi}_\Lambda^{-1}}
\]
where we think of the degree zero variable $z$ as of a complex variable, to compute the product of the factors $1+z/\lambda$ so to obtain an entire function $\Phi(z)$ and then to compute $\Phi(\alpha_{i}(E_{\rho_{\lambda}})\overline{\xi}_\Lambda^{-1})$ by putting $z=\alpha_{i}(E_{\rho_{\lambda}})\overline{\xi}_\Lambda^{-1}$ in the Taylor expansion of $\Phi$ at $z=0$. Notice that this last operation makes sense without any convergence issue as, by the finite dimensionality of $X$, the degree zero element $\alpha_{i}(E_{\rho_{\lambda}})\overline{\xi}_\Lambda^{-1}$ is nilpotent. Yet, there is no guarantee that the infinite product of the factors $1+z/\lambda$ will converge, and actually it does not. So one has to suitably regularize it in order to get a convergent product. A convenient way of doing so is by the technique of Weierstra{\ss} $\zeta$-regularization that we recall below. Our problems are not over, yet: $\zeta$-regularization may disrupt the expected modularity of {\color{purple}$\mathcal{W}$-}classes, so we will have to check in the end whether this is preserved. It will turn out that a topological constraint on $X$ has to be imposed in order to maintain modularity: $X$ has to be a rational string manifold.

\par
With these premises, we can now determine the {\color{purple}$\mathcal{W}$-}classes of the {\color{purple}$\C^\ast$-equivariant family} $\{M_\Lambda\}=\{\mathrm{Maps}(\C/\Lambda,X)\}$.
We will assume $X$ to be $2$-connected so that $M_\Lambda$ is connected. One can weaken this assumption by requiring $X$ to be only connected, and taking $M_\Lambda$ to be the space $\mathrm{Maps}_0(\C/\Lambda,X)$ of homotopically trivial maps from $\C/\Lambda$ to $X$. Or one can even remove any connectedness assumption on $X$ by working separately on each of the connected components of $\mathrm{Maps}_0(\C/\Lambda,X)$ (which bijectively correspond to the connected components of $X$).

As the smooth structure on $M_\Lambda$ is the standard Fr\'echet one, the tangent space at the point $\gamma\in \mathrm{Maps}(\C/\Lambda,X)$ is the space $H^0(\C/\Lambda;\gamma^\ast TX)$ of smooth sections of the pullback of the tangent bundle of $X$ via $\gamma$. In particular, for any $x\in X$ we have
\[
T_{\gamma_x}M_\Lambda=C^\infty(\C/\Lambda;T_xX)= C^\infty(\C/\Lambda;\R)\otimes_\R T_xX,
\]
and so the restriction of the complexified tangent bundle of $M_\Lambda$ to $X=\mathrm{Fix}(M_\Lambda)$ is 
\[
TM_\Lambda\otimes\C\bigr\vert_X=C^\infty(\C/\Lambda;\C)\otimes_\C (TX\otimes_\R \C).
\]
\begin{remark}\label{rem:j-is-identity}
In writing $X=\mathrm{Fix}(M_\Lambda)$ we have identified $X$ with $\mathrm{Fix}(M_\Lambda)$ via $j_\Lambda$. We will keep this identification fixed in all that follows, so that we will see $X$ as a submanifold of $M_\Lambda$ and consequently reduce ${\color{purple}j}_\Lambda$ to the identity of $X$.
\end{remark}
Since Fourier polynomials are dense in the Fr\'echet topology of $C^\infty(\C/\Lambda;\C)$, in the same vein of
\cite{atiyahcircsym}, we consider the 
{\color{purple} dense inclusion} 
\[
TM_\Lambda\otimes\C\bigr\vert_X{\color{purple}\overset{\mathrm{dense}}{\supseteq}}\left(\bigoplus_{\lambda\in \Lambda}\C_{(\lambda)}\right)\otimes_\C (TX\otimes_\R \C),
\]
where $\C_{(\lambda)}$ is the 1-dimensional representation of $\C/\Lambda$ with character $\rho_\lambda$. This immediately implies
\[
\nu_\Lambda\otimes\C\,{\color{purple}\overset{\mathrm{dense}}{\supseteq}}\left(\bigoplus_{\lambda\in \Lambda\setminus\{0\}}\C_{(\lambda)}\right)\otimes_\C (TX\otimes_\R \C),
\]
where $\nu_\Lambda$ denotes the normal bundle for the inclusion $X\hookrightarrow \mathrm{Maps}(\C/\Lambda,X)$.

By formally applying formula (\ref{eq:c-top-with-zeta-antiholo}) to this infinite rank situation, we obtain
\begin{align}
\notag
\widehat{c_{\mathrm{top},\C/\Lambda}^{\overline{\partial}}}(\nu_\Lambda\otimes \C)&=
\prod_{\lambda\in \Lambda\setminus\{0\}}\prod_{i=1}^{d}\left(1+\frac{\alpha_{i}(X)\overline{\xi}_\Lambda^{-1}}{\lambda}\right)\\
\label{eq:chern-weierstrass}
&=\prod_{i=1}^{d}
\prod_{\lambda\in \Lambda\setminus\{0\}}\left(1+\frac{z}{\lambda}\right)\biggr\vert_{z=\alpha_{i}(X)\overline{\xi}_\Lambda^{-1}},
\end{align}
where $\alpha_1(X),\dots,\alpha_d(X)$ are the Chern roots of the complexified tangent bundle $TX\otimes_\R\C$ of $X$. As anticipated, to compute (and actually give a meaning to) the infinite product 
\[
\prod_{\lambda\in \Lambda\setminus\{0\}}\left(1+\frac{z}{\lambda}\right)
\]
one uses Weierstra{\ss} $\zeta$-regularization. For the reader's convenience we recall the basics of the procedure here. A detailed treatment can be found, e.g., in \cite[Chapter 15]{rudin}.
For any $r\geq 0$, let
\[
P_r(z):=\sum_{j=1}^r \frac{z^j}{j}=\begin{cases}
0 &\text{if $r=0$}\\
z+\frac{z^2}{2}+\cdots +\frac{z^r}{r}&\text{if $r>0$}.
\end{cases}
\]
If $\{\kappa_n\}$ is a sequence of nonzero complex numbers with $|\kappa_n|\to +\infty$ for $n\to +\infty$ and $\{p_n\}$ is a sequence of nonnegative integers such that
\[
\sum_{n=1}^{\infty}\left( \frac{R}{|\kappa_n|}\right)^{1+p_n}<+\infty
\]
for every $R>0$, then the infinite product 
\[
\mathrm{Wei}_{\vec{\kappa},\vec{p}}(z)=\prod_{n=1}^\infty \left(1-\frac{z}{\kappa_n}\right)e^{P_{p_n}(z/\kappa_n)}
\]
converges and defines an entire function which has a zero at each point $\kappa_n$ and no other zeroes. More precisely, if $\kappa$ occurs with multiplicity $m$ in the sequence $\{\kappa_n\}$ then $\mathrm{Wei}_{\vec{\kappa},\vec{p}}(z)$ has a zero of order $m$ at $z=\kappa$. Moreover, the infinite product defining $\mathrm{Wei}_{\vec{\kappa},\vec{p}}(z)$ is unchanged under simultaneous renumbering of $\{\kappa_n\}$ and $\{p_n\}$. 
If $\Lambda\subseteq \mathbb{C}$ is a lattice, then the series
\begin{equation}\label{eq:converges}
\sum_{\lambda\in \Lambda\setminus\{0\}} \frac{1}{|\lambda|^s} 
\end{equation}
converges for $\Re(s)>2$. This implies that for any $r\geq 2$ the
series 
\[
\sum_{\lambda\in \Lambda\setminus\{0\}} \left( \frac{R}{|\lambda|}\right)^{1+r} = R^{r+1}\sum_{\lambda\in \Lambda\setminus\{0\}} \frac{1}{|\lambda|^{r+1}}
\]
converges, and so, by choosing $p_n$ to be the constant sequence $p_n\equiv r$ one sees that the infinite product 
\[
\prod_{\lambda\in \Lambda\setminus\{0\}}\left(1+\frac{z}{\lambda}\right)e^{P_r(-z/\lambda)}
\]
defines an entire function of $z$. Here we used the renumbering invariance to write the product as a product over $\Lambda\setminus\{0\}$. Now one makes a choice of arguments for the elements $\lambda\in \Lambda\setminus\{0\}$ in such a way that the $\zeta$-function $\zeta_{\Lambda\setminus\{0\}}$, defined by analytic extension of the holomorphic function
\[
\zeta_{\Lambda\setminus\{0\}}(s)=\sum_{\lambda\in \Lambda\setminus\{0\}}\frac{1}{\lambda^s}; \qquad \Re(s)>2
\]
is defined at $s=1$ and at $s=2$. By convergence of (\ref{eq:converges}) for $\Re(s)>2$, the $\zeta$-function $\zeta_{\Lambda\setminus\{0\}}$ will then be defined at every positive integer, and one can formally write
\begin{align*}
\prod_{\lambda\in \Lambda\setminus\{0\}}&\left(1+\frac{z}{\lambda}\right)=
\prod_{\lambda\in \Lambda\setminus\{0\}}e^{-P_r(-z/\lambda)}\left(1+\frac{z}{\lambda}\right)e^{P_r(-z/\lambda)}\\
&=
\left(\prod_{\lambda\in \Lambda\setminus\{0\}}e^{-P_r(-z/\lambda)}\right)
\prod_{\lambda\in \Lambda\setminus\{0\}}
\left(1+\frac{z}{\lambda}\right)e^{P_r(-z/\lambda)}\\
&=
e^{-\left(-\zeta_{\Lambda\setminus\{0\}}(1)z+\zeta_{\Lambda\setminus\{0\}}(2)\frac{z^2}{2}+\cdots+ \zeta_{\Lambda\setminus\{0\}}(r)\frac{(-z)^r}{r} \right)}
\prod_{\lambda\in \Lambda\setminus\{0\}}
\left(1+\frac{z}{\lambda}\right)e^{P_r(-z/\lambda)},
\end{align*}
where in the last step one has replaced the possibly divergent sums $\sum_{\lambda\in \Lambda\setminus\{0\}} \frac{1}{\lambda^j}$, for $j=1,\dots, r$ with their $\zeta$-regularizations. Thanks to absolute convergence, the terms $e^{(-z/\lambda)^k}$ freely move in and out from the product over $\Lambda\setminus\{0\}$ for $k>2$. Therefore,  
the last term in the above chain of formal identities is independent of $r$ as soon as $r\geq 2$ and we arrive at the following.
\begin{definition}\label{lemma-definition:weierstrass1}
Let a choice of arguments for the elements $\lambda\in \Lambda\setminus\{0\}$ in such a way that  $\zeta_{\Lambda\setminus\{0\}}$ is defined at $s=1$ and at $s=2$ be fixed. The Weierstra{\ss} $\zeta$-regularized product of the factors $(1+z/\lambda)$ with $\lambda$ ranging in $\Lambda\setminus\{0\}$ is
\begin{align*}
{\prod_{\lambda\in \Lambda\setminus\{0\}}}^{\hskip-6 pt \zeta}\left(1+\frac{z}{\lambda}\right)&:=
e^{\zeta_{\Lambda\setminus\{0\}}(1)z-\zeta_{\Lambda\setminus\{0\}}(2)\frac{z^2}{2}}
\prod_{\lambda\in \Lambda\setminus\{0\}}
\left(1+\frac{z}{\lambda}\right)e^{-\frac{z}{\lambda}+\frac{z^2}{2\lambda^2}}.
\end{align*}
\end{definition}
\begin{remark}
Choices of arguments for the elements $\lambda\in \Lambda\setminus\{0\}$ such that $\zeta_{\Lambda\setminus\{0\}}(1)$ and $\zeta_{\Lambda\setminus\{0\}}(2)$ are defined do actually exist and moreover there are quite natural choices with this property, see {\color{purple}\cite[Example 13]{quine}}. 
\end{remark}
We can now turn (\ref{eq:chern-weierstrass}) into a formal definition.
\begin{definition}\label{def:top-chern-wei}
Let $\nu_\Lambda$ be the normal bundle for the inclusion $X\hookrightarrow \mathrm{Maps}(\C/\Lambda,X)$. The Weierstra{\ss} $\zeta$-regularized equivariant top Chern class of $\nu_\Lambda\otimes\C$ in the antiholomorphic sector is defined as
\[
\widehat{c_{\mathrm{top},\C/\Lambda}^{\overline{\partial};\zeta}}(\nu_\Lambda\otimes \C):=\prod_{i=1}^{d}
{\prod_{\lambda\in \Lambda\setminus\{0\}}}^{\hskip-6 pt \zeta}\left(1+\frac{z}{\lambda}\right)\biggr\vert_{z=\alpha_{i}(X)\overline{\xi}_\Lambda^{-1}}
\]
\end{definition}
By analogy with Definition \ref{def:witten-classes} we then give the following.
\begin{definition}\label{def:regularized-witten-classes}
In the same assumptions as in Definitions \ref{lemma-definition:weierstrass1} {\color{purple}and \ref{def:top-chern-wei}},
 the \emph{$\zeta$-regularized total complex {\color{purple}$\mathcal{W}$-}class} of ${\color{purple}\nu\otimes\C}$ is the function
\[
{\color{purple}\mathcal{W}}^\zeta_{{\color{purple}\nu\otimes\C}}\colon\mathrm{Lattices}^+(\C)\to H^\bullet(X,\C)[\overline{\xi}_X^{-1}]
\]
given by
\[
{\color{purple}\mathcal{W}}^\zeta_{{\color{purple}\nu\otimes\C}}(\Lambda):=(j_\Lambda^\ast\otimes j_{\mathfrak{t};\Lambda}^*)\left(\frac{1}{\widehat{c_{\mathrm{top},\C/\Lambda}^{\overline{\partial};\zeta}}(\nu_\Lambda\otimes \C)}\right).
\]
The coefficient
\[
{\color{purple}\mathcal{W}}^\zeta_{{\color{purple}\nu\otimes\C};k}\colon \mathrm{Lattices}^+(\C)\to H^{2k}(X,\C)
\]
defined by the expansion 
\[
{\color{purple}\mathcal{W}}^\zeta_{{\color{purple}\nu\otimes\C}}(\Lambda)=\sum_{k=0}^\infty {\color{purple}\mathcal{W}}^\zeta_{{\color{purple}\nu\otimes\C};k}(\Lambda)\overline{\xi}_X^{-k}
\]
will be called the $k$-th $\zeta$-regularized complex {\color{purple}$\mathcal{W}$-}class of ${\color{purple}\nu\otimes\C}$.
\end{definition}
\begin{lemma}\label{lemma-definition:weierstrass2}
In the same assumptions as in Definition \ref{lemma-definition:weierstrass1} one has
\begin{align*}
{\prod_{\lambda\in \Lambda\setminus\{0\}}}^{\hskip-6 pt \zeta}\left(1+\frac{z}{\lambda}\right)=
e^{\zeta_{\Lambda\setminus\{0\}}(1)z-\zeta_{\Lambda\setminus\{0\}}(2)\frac{z^2}{2}}\frac{\sigma_\Lambda(z)}{z},
\end{align*}
where $\sigma_\Lambda(z)$ is the Weierstra{\ss} $\sigma$-function of the lattice $\Lambda$.
\end{lemma}
\begin{proof}
By definition of the Weierstra{\ss} $\sigma$-function (see, e.g., \cite[Section 20.42]{acoma} {\color{purple}or \cite[IV.3]{AEC})\footnote{{\color{purple} Beware that in the literature one has different conventions for the Weierstra{\ss} $\sigma$-function; compare, for instance, \cite[Chapter I, Theorem 6.4]{ataec} with \cite[Proposition 10.9]{ahr})}}}, one has
\begin{equation}\label{eq:sigma}
\sigma_\Lambda(z)=z \prod_{\lambda\in \Lambda\setminus\{0\}}
\left(1-\frac{z}{\lambda}\right)e^{\frac{z}{\lambda}+\frac{z^2}{2\lambda^2}}. 
\end{equation}
The statement then follows by changing $\lambda$ in $-\lambda$ in the above product and by comparing with Definition \ref{lemma-definition:weierstrass1}.
\end{proof}
\begin{corollary}
In the same assumptions as in Definition \ref{lemma-definition:weierstrass1} one has
\begin{equation}\label{eq:towards-witten-genus}
{\color{purple}\mathcal{W}}^\zeta_{{\color{purple}\nu\otimes\C}}(\Lambda)=e^{\zeta_{\Lambda\setminus\{0\}}(2)p_1(TX)\overline{\xi}_X^{-2}}
\prod_{i=1}^{d}\frac{z}{\sigma_\Lambda(z)}\biggr\vert_{z=\alpha_{i}(X)\overline{\xi}_X^{-1}},
\end{equation}
where $p_1(TX)$ denotes the first Pontryagin class of $TX$ seen as an element in $H^4(X;\C)$.
\end{corollary}
\begin{proof}
From Definitions \ref{lemma-definition:weierstrass1} and  \ref{def:top-chern-wei} and from Lemma \ref{lemma-definition:weierstrass2}, recalling Remark \ref{rem:j-is-identity} and that $j^\ast_{\mathfrak{t};\Lambda}(\overline{\xi}_\Lambda)=\overline{\xi}_X$, one has
\begin{equation}\label{eq:to-be-used-immediately}
{\color{purple}\mathcal{W}}^\zeta_{{\color{purple}\nu\otimes\C}}(\Lambda)= \prod_{i=1}^{d}\left(
e^{-\zeta_{\Lambda\setminus\{0\}}(1)z+\zeta_{\Lambda\setminus\{0\}}(2)\frac{z^2}{2}}\frac{z}{\sigma_\Lambda(z)}\right)\biggr\vert_{z=\alpha_{i}(X)\overline{\xi}_X^{-1}}.  
\end{equation}
One then rewrites the right hand side of (\ref{eq:to-be-used-immediately}) as
\[
e^{-\zeta_{\Lambda\setminus\{0\}}(1)c_1(TX\otimes \C)\overline{\xi}_X^{-1}+\zeta_{\Lambda\setminus\{0\}}(2)\left(\frac{1}{2}c_1(TX\otimes \C)^2-c_2(TX\otimes\C)\right)\overline{\xi}_X^{-1}}\prod_{i=1}^{d}\frac{z}{\sigma_\Lambda(z)}\biggr\vert_{z=\alpha_{i}(X)\overline{\xi}_X^{-1}},
\]
recalls that the odd Chern classes of the complexification of a real vector bundle vanish, and uses the relation $c_2(TX\otimes_\R\C)=-p_1(TX)$ to conclude.
\end{proof}

It is important to stress that the $\zeta$-function $\zeta_{\Lambda\setminus\{0\}}$ and its value at $2$ depend on the choice of arguments for the elements $\lambda$ in $\Lambda\setminus\{0\}$. Therefore, also $\mathcal{W}^\zeta_{\nu\otimes\C}$ depends on this choice. One removes this dependence by requiring that $p_1(X)$ is zero in $H^4(X;\mathbb{Q})$, i.e., by requiring that $X$ is a rational string manifold. With this assumption, formula (\ref{eq:towards-witten-genus}) reduces to
\begin{equation}\label{eq:witten-as-sigma}
{\color{purple}\mathcal{W}}^\zeta_{{\color{purple}\nu\otimes\C}}(\Lambda)=\prod_{i=1}^{d}\frac{z}{\sigma_\Lambda(z)}\biggr\vert_{z=\alpha_{i}(X)\overline{\xi}_X^{-1}},
\end{equation}
where now the right hand side is a canonically defined equivariant cohomology class in the antiholomorphic sector.

The entire function $z/\sigma_\Lambda(z)$ is the characteristic power series for the complex Witten genus \cite{ahr}. Therefore, summing up, we have obtained the following.
\begin{proposition}\label{prop:witten}
Let $X$ be a $d$-dimensional rational string manifold.
Then ${\color{purple}\mathcal{W}}^\zeta_{{\color{purple}\nu\otimes\C}}$ is the {\color{purple}complex} Witten class of the manifold $X$. In particular, if $d$ is even then
\[
\int_X {\color{purple}\mathcal{W}}^\zeta_{{\color{purple}\nu\otimes\C};d/2}
\]
is the complex Witten genus of $X$.
\end{proposition}
\begin{remark}\label{rem:reg-witten-classes-are modular}
Proposition \ref{prop:witten} in particular tells us that if $X$ is a rational string manifold, then the 
$\zeta$-regularized complex {\color{purple}$\mathcal{W}$-}classes 
\[
{\color{purple}\mathcal{W}}^\zeta_{{\color{purple}\nu\otimes\C};k}\colon \mathrm{Lattices}^+(\C)\to H^{2k}(X,\C)
\]
are modular forms of weight $k$. This can be directly seen from \eqref{eq:witten-as-sigma} by the modular properties of the function $(\Lambda,z)\mapsto z/\sigma_\Lambda(x)$, which are in turn immediate from the product formula \eqref{eq:sigma}.
\end{remark}

\begin{remark}\label{rem:choice-of-arguments}
When $\Lambda=\Z\oplus \Z\tau$ with $\Im(\tau)>0$, with the standard choice of arguments $-\pi\leq \mathrm{arg}(\lambda)<\pi$, one gets 
\[
\zeta_{\Lambda\setminus\{0\}}(2)=-4\pi i \frac{\eta'(\tau)}{\eta(\tau)}=G_2(\tau)
\]
where $\eta$ is the Dedekind $\eta$-function and $G_2$ is the quasi-modular Eisenstein series
\[
G_2(\tau)=\frac{\pi^2}{3}+\sum_{n \in \Z\setminus{0}} \sum_{m \in \Z}\dfrac{1}{(m+n\tau)^2},
\]
see \cite[Chapter 3, Ex.1]{apostol2012modular} and \cite[Example 13]{quine}. This explains the exponential prefactor
\[
e^{-G_2(\tau)p_1(X)}
\]
appearing in the expression of the Witten class for a non rationally string manifold for lattices of the standard form $\Z\oplus \Z\tau$. More generally, once an oriented basis $(\omega_1,\omega_2)$ for the lattice $\Lambda$ is chosen, one can write $\Lambda=\omega_1^{-1}(\Z\oplus\tau\Z)$ with $\tau=\omega_2/\omega_1$, choose an argument for $\omega_1$ in $[-\pi,\pi)$ and choose the arguments of the elements $\lambda\in \Lambda\setminus\{0\}$ so that 
\[
-\pi+\mathrm{arg}(\omega_1)\leq \mathrm{arg}(\lambda)<\pi+\mathrm{arg}(\omega_1).
\]
With this choice one has $\zeta_{\Lambda\setminus\{0\}}(2)=\omega_1^{-2}G_2(\tau)$.
\end{remark}

As an immediate corollary, we get the analogous of Proposition \ref{prop:witten} for the real {\color{purple}$\mathcal{W}$-}class. We first need a couple of obvious definitions.
\begin{definition}\label{def:euler-regularized}
Let $\nu_\Lambda$ be the normal bundle for the inclusion $X\hookrightarrow \mathrm{Maps}(\C/\Lambda,X)$. The Weierstra{\ss} $\zeta$-regularized equivariant Euler class of $\nu_\Lambda$ in the antiholomorphic sector is defined as
\[
\mathrm{eul}_{\C/\Lambda}^{\overline{\partial};\zeta}(\nu_\Lambda):=\sqrt{\widehat{c_{\mathrm{top},\C/\Lambda}^{\overline{\partial};\zeta}}(\nu_\Lambda\otimes \C)},
\]
where the determination of the square root is such that $\sqrt{1+\cdots}=1+\cdots$.
\end{definition}

\begin{definition}\label{def:regularized-witten-classes-real}
In the same assumptions as in Definitions \ref{lemma-definition:weierstrass1} {\color{purple}and \ref{def:top-chern-wei}},
 the \emph{$\zeta$-regularized total real {\color{purple}$\mathcal{W}$-}class} of $\nu$ is the function
\[
{\color{purple}\mathcal{W}}^\zeta_{\mathbb{R};{\color{purple}\nu}}\colon\mathrm{Lattices}^+(\C)\to H^\bullet(X,\C)[\overline{\xi}_X^{-1}]
\]
given by
\[
{\color{purple}\mathcal{W}}^\zeta_{\mathbb{R};{\color{purple}\nu}}(\Lambda):=(j_\Lambda^\ast\otimes j_{\mathfrak{t};\Lambda}^*)\left(\frac{1}{\widehat{\mathrm{eul}^{\overline{\partial};\zeta}_{\C/\Lambda}}(\nu_\Lambda)}\right).
\]
The coefficient
\[
{\color{purple}\mathcal{W}}^\zeta_{\mathbb{R};{\color{purple}\nu};k}\colon \mathrm{Lattices}^+(\C)\to H^{4k}(X,\C)
\]
defined by the expansion 
\[
{\color{purple}\mathcal{W}}^\zeta_{\mathbb{R};{\color{purple}\nu}}(\Lambda)=\sum_{k=0}^\infty {\color{purple}\mathcal{W}}^\zeta_{\mathbb{R};{\color{purple}\nu};k}(\Lambda)\xi_X^{-2k}
\]
will be called the $k$-th $\zeta$-regularized real {\color{purple}$\mathcal{W}$-}class of ${\nu}$.
\end{definition}
From equation \eqref{eq:witten-as-sigma} and by the usual $\sqrt{z}\leftrightarrow z$ rule for passing from Pontryagin classes of a real vector bundle to Chern classes of its complexification in characteristic power series for genera (see, e.g., \cite[Section 1.3]{Hirzebruch-topological-methods-in-algebraic-geometry}), we have that if $X$ is a rational string manifold of even dimension $d$, then 
\begin{equation}\label{eq:witten-as-sigma-real}
{\color{purple}\mathcal{W}}^\zeta_{\mathbb{R};{\color{purple}\nu}}(\Lambda)=\prod_{i=1}^{d/2}\frac{\sqrt{z}}{\sigma_\Lambda(z)}\biggr\vert_{z=\beta_{i}(X)\overline{\xi}_X^{-2}},
\end{equation}
where the $\beta_i(X)$ are the Pontryagin roots of $TX$. From this, one has the real {\color{purple}$\mathcal{W}$-}classes analogue of Proposition \ref{prop:witten}.

\begin{proposition}\label{prop:witten-real}
Let $X$ be a rational string manifold of even dimension $d$.
Then ${\color{purple}\mathcal{W}}^\zeta_{\mathbb{R};{\color{purple}\nu}}$ is the {\color{purple}real} Witten class of the manifold $X$. In particular, if $d\equiv 0\mod 4$ then
\[
\int_X {\color{purple}\mathcal{W}}^\zeta_{\mathbb{R};{\color{purple}\nu};d/2}
\]
is the real Witten genus of $X$.
\end{proposition}
Clearly, we also have the analogue of Remark \ref{rem:reg-witten-classes-are modular}
\begin{remark}\label{rem:reg-real-witten-classes-are modular}
If $X$ is a rational string manifold of even dimension $d$, then the 
$\zeta$-regularized real {\color{purple}$\mathcal{W}$-}classes 
\[
{\color{purple}\mathcal{W}}^\zeta_{\mathbb{R};{\color{purple}\nu};k}\colon \mathrm{Lattices}^+(\C)\to H^{4k}(X,\C)
\]
are modular forms of weight $2k$.
\end{remark}

\nocite{*}
\bibliographystyle{alpha}
\bibliography{bibliography}
\end{document}